\documentclass[a4paper,12pt]{amsart}

\usepackage[T1]{fontenc}
\usepackage[utf8]{inputenc}

\usepackage[english]{babel}

\usepackage{amsmath, amssymb, amsthm, amsfonts}
\usepackage{mathrsfs}
\usepackage{tikz}
\usetikzlibrary{arrows}
\usetikzlibrary{matrix}
\usetikzlibrary{cd}
\usepackage[left=2.5cm, right=2.5cm, top=2cm]{geometry}
\usepackage{enumerate}
\usepackage{hyperref}

\addto\extrasenglish{
	
}

\theoremstyle{plain}
\newtheorem{thm}{Theorem}
\newtheorem{lem}{Lemma}
\newtheorem{cor}{Corollary}

\newtheorem{prop}{Proposition}
\newtheorem*{thm*}{Theorem}

\theoremstyle{definition}
\newtheorem{defi}{Definition}
\newtheorem{rema}{Remark}
\newtheorem{exam}{Example}

\newtheorem*{ques*}{Question}
\newtheorem*{conss*}{Constructions}

\newcommand{\laur}{\left[t,t^{-1} \right]}
\newcommand{\polyp}{\left[ t \right]}
\newcommand{\polym}{\left[ t^{-1} \right]}

\newcommand{\powel}{\left[ \left[ t,t^{-1} \right] \right]}

\newcommand{\sub}{\textnormal{sub}}

\newcommand{\PPU}{\textnormal{PPU}}
\newcommand{\PU}{\textnormal{PU}}
\newcommand{\Hil}{\mathcal{H}}

\newcommand{\R}{\mathbb{R}}
\newcommand{\C}{\mathbb{C}}
\newcommand{\A}{\mathcal{A}}
\newcommand{\K}{\mathbb{K}}
\newcommand{\Z}{\mathbb{Z}}
\newcommand{\Hill}{\mathcal{H}\left[ \left[ t,t^{-1} \right] \right]}
\newcommand{\Hilp}{\mathcal{H}\left[ \left[ t \right] \right]}
\newcommand{\Hilm}{\mathcal{H}\left[ \left[ t^{-1} \right] \right]}

\setlength{\parindent}{5pt}
\setlength{\parskip}{2ex plus0.5ex minus 0.2ex}

\begin{document}

\title{Right $\ell$-groups associated with von Neumann algebras}
\author{Carsten Dietzel}
\email{carstendietzel@gmx.de}
\address{Institute of algebra and number theory, University of Stuttgart, Pfaffenwaldring 57, 70569 Stuttgart, Germany}
\date{\today}

\begin{abstract}
In \cite{rump_goml}, Rump defined and characterized noncommutative universal groups $G(X)$ for generalized orthomodular lattices $X$.

We give an explicit construction of $G(X)$ in terms of \emph{pure paraunitary groups} when $X$ is the projection lattice of a von Neumann algebra. The results given here extend some of those in \cite{dietzel}.
\end{abstract}

\maketitle

\section*{Introduction}

For details on quantum logic, the interested reader can consult, for example, \cite{cohen}.

Hilbert spaces over $\K = \C$ provide a framework for quantum mechanics in that they contain all possible states a certain physical system can be in.

A very classical Hilbert space in quantum mechanics is the space $\Hil = L^2(\R)$ of square-integrable complex-valued functions on the real line whose elements represent the states of one-dimensional systems.

Closed subspaces of $\Hil$ represent certain physical properties of the observed system. A simple property in the case of $\Hil = L^2(\R)$, for example, is the property of being of \emph{positive parity} which mathematically amounts for a wavefunction $\psi$ to be even, i.e. $\psi(x) = \psi(-x)$ for all $x$. The statement \glqq$\psi$ is of positive parity\grqq\, is a prototypical example of a proposition in what is called \emph{quantum logic}.

Closed subspaces, representing quantum-mechanical \glqq propositions\grqq\,, provide one example of a semantical framework for quantum logic. This logic, however, is an example of a \emph{non-classical} logic. A vague explanation for this is that a physical system initially need not be in the state which is being measured but the proposition \glqq$\psi$ is of positive parity\grqq\ (for example) is a \emph{result} of the measuring.

A better explanation can be given as follows: classical logic is \emph{distributive} in the sense that if we are given propositions, say $P$, $Q$ and $R$, they fulfil the distributive laws, i.e.
\begin{align*}
P \wedge ( Q \vee R ) & \Leftrightarrow (P \wedge Q) \vee (P \wedge R), \\
P \vee ( Q \wedge R ) & \Leftrightarrow (P \vee Q) \wedge (P \vee R).
\end{align*}
The lattice of closed subspaces of a Hilbert space, however, is not distributive, in general. In fact when the Hilbert space is of infinite dimension it is not even modular, as is discussed in \cite[Problem 15]{halmos}.

Quantum logic is thus not classical. In fact, classical logic is \emph{contained} in quantum logic. We explain this further. A semantic framework for classical logic is provided by the \emph{Boolean algebras} but how does a semantic framework for quantum logic look like that is independent of the notion of Hilbert space? The answer has been obtained in a series of articles written by Birkhoff, von Neumann et al. (\cite{Birkhoff_von_Neumann}, \cite{neumann_algebraic_generalization}) and Husimi (\cite{Husimi}) - it is given by the \emph{orthomodular lattices}:

An orthomodular lattice is a bounded lattice $X$ (with its lowest and greatest elements denoted by $0$ and $1$) and an order-reversing involution $\ast: X \to X$ such that the following axioms hold:
\begin{align}
x \wedge x^{\ast} & = 0 \tag{OL1} \label{eq:ol1} \\
x \vee x^{\ast} & = 1  \tag{OL2} \label{eq:ol2} \\
x \leq y & \Longrightarrow x \vee (x^{\ast} \wedge y) = y \tag{OML} \label{eq:oml}
\end{align}

Typical examples of OMLs are
\begin{enumerate}[i)]
\item Boolean algebras, with $\ast$ being Boolean negation,
\item the lattice of closed subspaces of a Hilbert space, with $\ast$ being the operation of taking orthogonal complements or, more generally,
\item the lattice of closed $\A$-invariant subspaces of a Hilbert space $\Hil$, where $\A$ is a von Neumann algebra acting on $\Hil$, the $\ast$-operation being the same as above.
\end{enumerate}

There is a connection between OMLs and group theory which we will investigate in this article:

Let $X$ be an OML. A \emph{group-valued measure} (which we abbreviate by \emph{gvm}) on $X$ is a mapping $\mu: X \to G$ where $G$ is a group, such that $\mu(x \wedge y) = \mu(x)\mu(y)$ whenever $y \geq x^{\ast}$. It is clear that then there must also be a \emph{universal} gvm on $X$, i.e. a fixed gvm $\iota: X \to G(X)$ - where $G(X)$ is the \emph{structure group} of $X$ - such that the following universal property holds:

For any gvm $\mu: X \to G$, there is a unique homomorphism of groups $\overline{\mu}: G(X) \to G$ such that $\mu = \overline{\mu}\iota$.

Rump proved in \cite{rump_goml} that the submonoid $S(X)$ generated by $X$ in $G(X)$ is the negative cone of a right-invariant lattice order on $G(X)$. He furthermore gave a characterization of all possible structure groups in terms of ordered groups. This is one of many of Rump's results connecting logical algebras (\emph{L-Algebras}) with ordered algebraic structures. A classical precursor of this kind of results is Mundici's theorem on the equivalence of MV-algebras and abelian $\ell$-groups with strong order unit (\cite{mundici}).

However, due to the broad variety of OMLs, a general description of their structure groups $G(X)$ is likely to be general as well. In fact, a \glqq generic\grqq\, $G(X)$ is simply a group defined by generators and relations which are derived from the respective OML.

However, one gets more specific results when restricting to a more specific class of objects. This is essentially what we are doing in this article - namely, we give a rather concrete realization of $G(X)$ whenever $X$ is the OML of $\A$-invariant subspaces for a von Neumann-algebra $\A$ on a Hilbert space $\Hil$.

We will prove:

\begin{thm*}
Let $\A$ be a von Neumann algebra on a Hilbert space $\Hil$.

If $X(\A^{\prime})$ is the OML of $\A^{\prime}$-invariant subspaces where $\A^{\prime}$ is the commutant of $\A$, then there is an isomorphism of groups $G(X(\A^{\prime})) \cong \PPU(\A)$, where
\[
\PPU(\A) = \left\{ \sum_{i = -\infty}^{\infty} t^i \varphi_i \in \A\laur: \left( \sum_{i = -\infty}^{\infty}  t^i \varphi_i \right)\left( \sum_{i = -\infty}^{\infty}  t^{-i} \varphi_i^{\ast} \right) = 1 \wedge \sum_{i = -\infty}^{\infty} \varphi_i = 1 \right\}
\]
is the \emph{pure paraunitary group} of $\A$.
\end{thm*}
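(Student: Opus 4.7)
My plan is to exhibit an explicit gvm $\mu: X(\A') \to \PPU(\A)$, invoke the universal property of $G(X(\A'))$ to obtain a homomorphism $\Phi: G(X(\A')) \to \PPU(\A)$, and then verify that $\Phi$ is bijective. By the double commutant theorem, I identify $X(\A')$ with the projection lattice of $\A$ and set
\[
\mu(p) := p + t(1-p) \in \A\laur
\]
for each projection $p \in \A$. Paraunitarity and purity are immediate: $\mu(p)\mu(p)^{\ast} = p^2 + t^{-1}p(1-p) + t(1-p)p + (1-p)^2 = p + (1-p) = 1$ and $\mu(p)|_{t=1} = 1$. To check the gvm axiom, suppose $q \geq p^{\ast}$; then $p$ and $q$ commute with $pq = p + q - 1 = p \wedge q$, and expanding $\mu(p)\mu(q)$ using these identities collapses it to $pq + t(1-pq) = \mu(p \wedge q)$, as required.

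To show $\Phi$ is surjective I induct on the span $b - a$ of $\varphi = \sum_{i=a}^{b} t^i \varphi_i \in \PPU(\A)$, with $\varphi_a, \varphi_b \neq 0$. Paraunitarity yields $\varphi_a \varphi_b^{\ast} = 0$, which translates to $\varphi_a s(\varphi_b) = 0$, where $s(\varphi_b) \in \A$ is the support projection of $\varphi_b$ (it lies in $\A$ because von Neumann algebras contain the support projections of their elements). Setting $q := 1 - s(\varphi_b)$, a short calculation shows that $\varphi \cdot \mu(q)^{-1}$ is again pure paraunitary and supported in $[a, b-1]$. Iterating (and, symmetrically, shrinking the lower end using range projections of $\varphi_a$) reduces $\varphi$ to a monomial $t^c$, and since $t = \mu(0)$, the original $\varphi$ factors as a finite product of $\mu(p)^{\pm 1}$ and so lies in the image of $\Phi$.

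Injectivity of $\Phi$ is the main obstacle. The cleanest strategy is to show that $(\PPU(\A), \mu)$ itself satisfies the universal property of $G(X(\A'))$: given any gvm $\nu: X(\A') \to G$, define $\bar\nu: \PPU(\A) \to G$ on a factorization $\varphi = \mu(p_1) \cdots \mu(p_n)$ produced by the reduction algorithm by $\bar\nu(\varphi) := \nu(p_1) \cdots \nu(p_n)$. Well-definedness requires that any two such factorizations of the same element be interconvertible using only the gvm relation $\mu(p)\mu(q) = \mu(p \wedge q)$ (for $q \geq p^{\ast}$), which in turn reduces to a careful analysis of the nondeterministic choices in the reduction. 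A more conceptual alternative, better aligned with the paper's title, is to equip $\PPU(\A)$ with a right-invariant lattice order whose negative cone is the submonoid generated by the $\mu(p)$ and apply Rump's characterization from \cite{rump_goml}; this parallels and extends the approach of \cite{dietzel}.
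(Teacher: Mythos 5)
Your construction of the gvm $\mu(p) = p + t(1-p)$ and the verification that $\mu(p)\mu(q) = \mu(p\wedge q)$ for $q \geq p^{\ast}$ are correct, and they match the paper's elements $p_M = t\pi_M + \pi_{M^{\ast}}$ up to the substitution $p = \pi_{M^{\ast}}$. Your surjectivity argument is also sound: the top coefficient of $\varphi\varphi^{\ast}$ gives $\varphi_b\varphi_a^{\ast} = 0$, and multiplying by $\mu(1-s)^{-1}$ with $s$ the \emph{right} support projection of $\varphi_b$ (which does lie in $\A$) kills both the degree-$b$ and the degree-$(a-1)$ coefficients, so the span strictly decreases; this is a coefficient-level analogue of the paper's induction in Proposition~\ref{prop:omega_is_surjective}.

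The genuine gap is injectivity, which you correctly identify as the main obstacle but do not resolve. Your strategy (a) — checking that any two factorizations into $\mu(p)^{\pm 1}$ are interconvertible via the defining relations — is a confluence/word-problem argument whose ``careful analysis of the nondeterministic choices'' is precisely where all the difficulty lives; nothing in your reduction algorithm controls how two different choices of support projections at intermediate stages are related. Your strategy (b) is the paper's actual route, but its crucial ingredient is missing: one must prove that the right-invariant order on $\PPU(\A)$ with positive cone $\PPU^{+}(\A) = \PPU(\A)\cap\A\polyp$ is a \emph{lattice} order (\autoref{thm:PPU_is_lattice_ordered}). The paper achieves this by exhibiting the order isomorphism $\Omega:\varphi\mapsto\varphi\Hilm$ from $\PPU(\A)$ onto the lattice $\sub(\Hil)_{-\infty}^{\infty}$ of $\A^{\prime}\polym$-invariant closed subspaces of $\Hill$ sandwiched between $t^{-n}\Hilm$ and $t^{n}\Hilm$, so that the lattice property is inherited from the subspace lattice; this is the technical heart of the argument and has no counterpart in your proposal. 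Even granting the lattice order, Rump's \autoref{thm:which_groups_are_structure_groups} further requires verifying that $t$ is a \emph{singular} strong order unit, that the divisors of $t$ are exactly the $\mu(p)$ (\autoref{lem:divisors_of_t}), and that the interval $[1,t]$ with orthocomplementation $\varphi\mapsto\varphi^{-1}t$ is isomorphic to $X(\A^{\prime})$ \emph{as an OML} (\autoref{cor:gamma_is_an_isomorphism}); none of these checks appear. As written, your argument establishes only that $\PPU(\A)$ is a quotient of $G(X(\A^{\prime}))$.
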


The case when $\A$ is a factor of type $I_n$ has essentially been covered by the author in the article \cite{dietzel} where a realization of $G(X)$ is given in the case when $X$ is the OML associated with a hermitian, anisotropic bilinear form on a finite-dimensional vector space (not necessarily over $\R$ or $\C$). Some arguments given in the present article will be similar to the arguments given in the preceeding one. However, we also tried to make some of them more streamlined in the von-Neumann-case and hope that we succeeded in doing so.

The outline of this article is as follows:

\autoref{sec:structure_groups} is rather a collection of definitions and facts: we define orthomodular lattices and explain how their structure groups are constructed. Furthermore, we give the definition of right $\ell$-groups and explain Rump's characterization theorem.

In \autoref{sec:paraunitary_groups}, we associate with each Banach-$\ast$-algebra $\A$ a so-called \emph{paraunitary group} $\PU(\A)$ and define a certain subgroup - the \emph{pure} paraunitary group $\PPU(\A)$. On these groups, we define a right-invariant partial order. To give the reader some intuition, we explicitly calculate the pure paraunitary groups of abelian $C^{\ast}$- and von Neumann-algebras.

\autoref{sec:ppu_is_lattice_ordered} is the heart of this article: we prove that the right-invariant order of $\PPU(\A)$ is a lattice when $\A$ is a von Neumann-algebra. We will essentially do this by establishing an order-isomorphism between $\PPU(\A)$ and a sublattice of the subspace lattice of a suitable Hilbert space. Thus, the property of the latter being a lattice (which is rather obvious) carries over to $\PPU(\A)$ (where it is less obvious).

In \autoref{sec:ppu_is_structure_group} we deduce that $\PPU(\A)$ is indeed the structure group of the OML $X(\A^{\prime})$. This will be done by showing that the right $\ell$-group $\PPU(\A)$ fulfils the conditions of Rump's characterization theorem.

In \autoref{sec:remarks}, we discuss how the pure paraunitary groups might be made \glqq more analytic\grqq .

\section{Structure groups of orthomodular lattices} \label{sec:structure_groups}

\subsection{Orthomodular lattices}

Let $X$ be a bounded lattice where we denote the lowest and greatest elements by $0$ resp. $1$.

We call an antitone involution $\ast: X \to X$ an \emph{orthocomplementation} if for any $x \in X$, we have the identities:
\begin{align*}
x \wedge x^{\ast} & = 0 \\
x \vee x^{\ast} & = 1.
\end{align*}

We can now introduce the topic of this subsection:

\begin{defi}
An \emph{orthomodular lattice} (\emph{OML}, for short) is a bounded lattice $X$ with orthocomplementation $\ast: X \to X$ such that the \emph{orthomodular law}
\begin{equation} \label{eq:orthomodular_law}
x \leq y \Rightarrow x \vee (x^{\ast} \wedge y) = y \tag{OML}
\end{equation}
holds.
\end{defi}

On each OML, one can define the (co-)orthogonality relations:
\begin{align*}
x \bot y & : \Leftrightarrow y \leq x^{\ast} \tag{$\bot$} \\
x \top y & : \Leftrightarrow y \geq x^{\ast}. \tag{$\top$}
\end{align*}
One can easily see that $\bot, \top$ are symmetric and are connected by the rule $x \top y \Leftrightarrow x^{\ast} \bot y^{\ast}$.

We define a \emph{partial monoid} as a set $X$ together with a distinguished element $e$ (its \emph{neutral} element) and a \emph{partial} mapping $\cdot: X \times X \rightharpoonup X$ such that:
\begin{enumerate}[i)]
\item for all $x \in X$, $e \cdot x$ and $x \cdot e$ are always defined, and we have $e \cdot x = x \cdot e = x$,
\item whenever either $(x \cdot y) \cdot z$ or $x \cdot (y \cdot z)$ is defined, so is the other, and in this case we have $(x \cdot y) \cdot z = x \cdot (y \cdot z)$ (\emph{partial associativity}).
\end{enumerate}

If the existence of $x \cdot y$ implies the existence of $y \cdot x$, and $x \cdot y = y \cdot x$ holds in this case, we say $X$ is \emph{partially commutative}.

Each OML is in fact a partial monoid:

\begin{prop} \label{prop:omls_are_partial_monoids}
Let $X$ be an OML. The partial operation
\[
x \oplus y =
\begin{cases} x \vee y & x \bot y \\
\text{not defined} & \text{else}  \end{cases}
\]
makes $X$ a partial monoid with $e_X = 0$.

Dually, the partial operation
\[
x \sqcap y =
\begin{cases} x \wedge y & x \top y \\
\text{not defined} & \text{else}  \end{cases}
\]
makes $X$ a partial monoid with $e_X = 1$.

Both partial monoid structures are partially commutative.
\end{prop}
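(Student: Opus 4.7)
The plan is to verify, for each of the two partial operations separately, the three defining axioms of a (partially commutative) partial monoid: that the designated element is a two-sided neutral element, that partial associativity holds, and that the operation is partially commutative. Since $\ast$ is an order-reversing involution, it interchanges joins and meets (De Morgan) and swaps the relations $\bot$ and $\top$; so the $\sqcap$-case should dualise from the $\oplus$-case, and I would only write out one of them in full.

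For $\oplus$ with neutral element $0$: since $0 \leq x^{\ast}$ for every $x$, we always have $x \bot 0$ and $0 \bot x$, so both $x \oplus 0$ and $0 \oplus x$ are defined and equal to $x \vee 0 = x$. Partial commutativity is immediate from the symmetry of $\bot$ and the commutativity of $\vee$. For partial associativity the key observation is the following reformulation: $(x \oplus y) \oplus z$ being defined means $x \bot y$ together with $(x \vee y) \bot z$, i.e.\ $y \leq x^{\ast}$ and $z \leq (x \vee y)^{\ast} = x^{\ast} \wedge y^{\ast}$, which is equivalent to the three elements $x, y, z$ being pairwise orthogonal. This condition is manifestly symmetric in the three arguments, so in particular it is equivalent to $y \bot z$ together with $x \bot (y \vee z)$, i.e.\ to $x \oplus (y \oplus z)$ being defined; the two products then both coincide with $x \vee y \vee z$ by associativity of $\vee$.

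For $\sqcap$ with neutral element $1$, the same argument runs verbatim with $\bot$ replaced by $\top$, $\vee$ by $\wedge$, $0$ by $1$, and the De Morgan identity $(x \wedge y)^{\ast} = x^{\ast} \vee y^{\ast}$ used in place of the one above. Alternatively, one observes that $\ast : X \to X$ is an order-reversing bijection carrying $(X, \vee, 0, \bot)$ onto $(X, \wedge, 1, \top)$, so that it transports the partial monoid structure $\oplus$ to the partial monoid structure $\sqcap$, and the axioms carry over automatically.

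There is no genuine obstacle here; the only step requiring any thought is recognising that the condition governing existence of a threefold $\oplus$-product is pairwise orthogonality, which is symmetric in the three arguments. It is perhaps worth noting that the orthomodular law \eqref{eq:orthomodular_law} itself plays no role: the proposition holds for arbitrary orthocomplemented bounded lattices.
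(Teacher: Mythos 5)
Your proof is correct and follows essentially the same route as the paper: neutrality of $0$, partial commutativity from symmetry of $\bot$, and partial associativity via $z \leq (x \vee y)^{\ast} = x^{\ast} \wedge y^{\ast}$. Your reformulation of the existence condition as pairwise orthogonality of $x,y,z$ is a slightly cleaner packaging of the paper's argument (it gives both implications at once by symmetry), and your closing observation that the orthomodular law is not needed is accurate.
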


\begin{proof}
We only prove the first statement for the second statement is proved similarly.

Partial commutativity follows immediately from the symmetry of $\bot$ and the commutativity of $\vee$.

For all $x$ we clearly have $0 \bot x$, and $0 \vee x = x$, so $0$ is indeed a neutral element for $\oplus$.

It remains to show that the existence of $(x \oplus y) \oplus z$ implies that of $x \oplus (y \oplus z)$. $\vee$ is associative, so this already suffices for partial associativity.

So, assume that $(x \oplus y) \oplus z$ exists. This means that $x \bot y$ and $z \bot (x \oplus y)$. Therefore $z \leq (x \vee y)^{\ast}\leq y^{\ast}$ which implies $z \bot y$ - this shows that $y \oplus z$ exists. Similarly, one sees that $z \bot x$.

From $x \bot y, z$ we deduce $x \leq (y^{\ast} \wedge z^{\ast}) = (y \vee z)^{\ast}$. This finally shows that $x \oplus (y \oplus z)$.

By using partial commutativity (or mimicking the argument from above), one can show that the existence of $x \oplus (y \oplus z)$ implies that of $(x \oplus y) \oplus z$, too.
\end{proof}

In this article we will be concerned about a certain class of OMLs derived from von Neumann algebras. We first recall some basic definitions and facts about von Neumann algebras (details can be found in \cite{Dixmier_Neumann}, for example):

Let $\Hil$ be some complex Hilbert space (with inner product denoted by $\left\langle -,- \right\rangle$ and norm denoted by $\Vert . \Vert$). Let $\mathcal{B}(\Hil)$ be the Banach-$\ast$-algebra of bounded operators on $\Hil$ where we define the $\ast$-operation, as usual, by taking adjoints, i.e. $\left\langle \varphi x, y \right\rangle = \left\langle x, \varphi^{\ast} y \right\rangle$.

We define the \emph{commutant} of some subset $M \subseteq \mathcal{B}(\Hil)$ by
\[
M^{\prime} := \left\{ \varphi \in \mathcal{B}(\Hil): \forall m \in M: \varphi m = m \varphi  \right\}.
\]
A \emph{von Neumann algebra} on $\Hil$ is a subalgebra $\A \subseteq \mathcal{B}(\Hil)$ with $\A^{\prime \prime} = \A$ that is closed under the $\ast$-operation. It is easily seen that $\A^{\prime}$ is then a von Neumann algebra, too.

We call $\pi \in \mathcal{B}(\Hil)$ a \emph{projection} when $\pi = \pi^{\ast}$ and $\pi^2 = \pi$. For a closed linear subspace $M \subseteq \Hil$, we denote by $\pi_M$ the (unique) projection of $\Hil$ onto $M$.

A closed linear subspace $M$ is called \emph{$\A$-invariant} whenever $\A M \subseteq M$.

The following duality between projections in $\A$ and $\A^{\prime}$-invariant subspaces will play a crucial role later:

\begin{prop} \label{prop:invariant_subspaces_are_annihilated}
A closed linear subspace $M \subseteq \Hil$ is $\A^{\prime}$-invariant if and only if the orthogonal projections $\pi_M, \pi_{M^{\ast}}$ lie in $\A$.
\end{prop}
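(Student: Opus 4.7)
The plan is to reduce the statement to the assertion ``$M$ is $\A'$-invariant if and only if $\pi_M \in \A$.'' Since $\A$ is a von Neumann algebra (hence unital, as $\mathbf{1} = \mathbf{1}\cdot\mathbf{1} \in \A'' = \A$) and $\pi_{M^{\ast}} = \mathbf{1} - \pi_M$, membership of $\pi_M$ in $\A$ is equivalent to that of $\pi_{M^{\ast}}$, so the two formulations are equivalent. The deeper direction will use the double commutant theorem $\A'' = \A$, so the task reduces to showing that $\pi_M$ commutes with every element of $\A'$.

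For the easy direction, I would assume $\pi_M \in \A$. Then for any $a' \in \A'$ and any $x \in M$, $a' x = a' \pi_M x = \pi_M a' x \in M$, which shows $\A' M \subseteq M$.

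For the harder direction, I would start from $\A'$-invariance of $M$ and first promote it to $\A'$-invariance of $M^{\perp} = M^{\ast}$. The key observation is that $\A'$ is itself a $\ast$-algebra (because $\A$ is, and commutants of $\ast$-closed sets are $\ast$-closed). Hence for $a' \in \A'$ and $x \in M^{\perp}$, $y \in M$, one computes $\langle a' x, y\rangle = \langle x, (a')^{\ast} y\rangle = 0$ since $(a')^{\ast} \in \A'$ sends $M$ into $M$. Thus $a' x \in M^{\perp}$. Having both $M$ and $M^{\perp}$ invariant under $\A'$, decomposing a general $z \in \Hil$ as $z = \pi_M z + (\mathbf{1}-\pi_M)z$ and applying $a'$ shows that $a' z$ decomposes as $a'\pi_M z \in M$ plus $a'(\mathbf{1}-\pi_M)z \in M^{\perp}$. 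Uniqueness of the orthogonal decomposition then gives $\pi_M a' z = a' \pi_M z$, i.e.\ $\pi_M \in \A'' = \A$, and consequently $\pi_{M^{\ast}} = \mathbf{1}-\pi_M \in \A$ as well.

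There is no real obstacle here: the whole argument is a standard application of von Neumann's double commutant theorem. The only small point worth making explicit is the $\ast$-invariance of $\A'$, which is precisely what is needed to propagate invariance from $M$ to $M^{\perp}$ and thus produce the full commutation relation rather than mere semi-invariance.
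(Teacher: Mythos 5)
Your proof is correct and follows essentially the same route as the paper: reduce to $\pi_M$ via $\pi_{M^{\ast}} = 1 - \pi_M$, use the $\ast$-closure of $\A^{\prime}$ to propagate invariance from $M$ to $M^{\ast}$, and then read off the commutation $\pi_M a^{\prime} = a^{\prime}\pi_M$ from the orthogonal decomposition, concluding with $\A^{\prime\prime} = \A$. No differences worth noting.
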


\begin{proof}
We always have $1_\Hil \in \A$. Together with $\pi_{M^{\ast}} = 1_\Hil - \pi_M$ this shows that we only have to prove (resp. assume) that $\pi_M \in \A$.

First of all, let $M$ be $\A^{\prime}$-invariant.

$M^{\ast}$ then is $\A^{\prime}$-invariant as well: when $y \in \Hil$ fulfils $\left\langle x,y \right\rangle = 0$ for all $x \in M$ then for any $\varphi \in \A^{\prime}, x \in M$ we have $\left\langle x, \varphi y \right\rangle = \left\langle \varphi^{\ast} x, y \right\rangle = 0$ because $\varphi^{\ast} x \in M$, since $\varphi^{\ast} \in \A^{\prime}$ and $M$ is $\A^{\prime}$-invariant.

We must show that $\pi_M$ commutes with all $A \in \A^{\prime}$: let $A \in \A^{\prime}$. Writing $x \in \Hil$ as $x = x_M + x_{M^{\ast}}$ with $x_M \in M, x_{M^{\ast}}\in M^{\ast}$, we have
\[
\pi_M A x = \pi_M (Ax_M + \underbrace{Ax_{M^{\ast}}}_{\in M^{\ast}}) = Ax_M = A \pi_M x
\]
which proves that $\pi_M \in \A^{\prime\prime} = \A$.

On the other hand, let $\pi_M \in \A$. We show that $M$ is invariant under all $A \in \A^{\prime}$:

If $x \in M$, we have
\[
Ax = A \pi_M x = \pi_M A x
\]
from which we conclude that also $Ax \in M$.

\end{proof}

\begin{prop} \label{prop:invariant_subspaces_form_omls}
Let $\A$ be a von Neumann algebra acting on the Hilbert space $\Hil$. Then the lattice of $\A$-invariant closed subspaces becomes an OML under the operations:
\begin{align*}
M \vee N & = \overline{M + N} \\
M \wedge N & = M \cap N \\
M^{\ast} & = \left\{y \in \Hil: \forall x \in M: \left\langle x,y \right\rangle = 0 \right\}
\end{align*}
\end{prop}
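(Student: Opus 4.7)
The plan is to verify, in three steps, that (i) the operations $\wedge$, $\vee$, $\ast$ preserve $\A$-invariance (so they are actually defined on $X(\A)$), (ii) with $\wedge$ and $\vee$ these subspaces form a bounded lattice, and (iii) $\ast$ is an orthocomplementation that satisfies the orthomodular law.

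First I would check closure. For $M \wedge N = M \cap N$ this is immediate from $\varphi(M \cap N) \subseteq M \cap N$ for $\varphi \in \A$. For $M \vee N = \overline{M+N}$ the sum $M+N$ is $\A$-invariant by linearity, and continuity of each $\varphi \in \A$ transports invariance to the closure. For $M^{\ast}$ the key point is that $\A$ is self-adjoint: if $y \in M^{\ast}$ and $\varphi \in \A$, then for every $x \in M$ we have $\varphi^{\ast} x \in M$ (since $\varphi^{\ast} \in \A$), whence $\left\langle x, \varphi y\right\rangle = \left\langle \varphi^{\ast}x,y\right\rangle = 0$. Given closure, $X(\A)$ becomes a bounded lattice in the standard way: arbitrary intersections of closed subspaces are closed, and $\overline{M+N}$ is the least closed subspace containing both $M$ and $N$. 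The bottom and top elements are $\{0\}$ and $\Hil$.

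Next I would verify the orthocomplementation axioms. Antitone involutivity of $\ast$ on the full lattice of closed subspaces is classical, as are the identities $M \cap M^{\ast} = \{0\}$ and $\overline{M + M^{\ast}} = \Hil$; restricting to $\A$-invariant subspaces changes nothing about these facts, so \eqref{eq:ol1} and \eqref{eq:ol2} hold.

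The core step is the orthomodular law. Assume $M \leq N$ in $X(\A)$. The inclusion $M + (M^{\ast} \wedge N) \subseteq N$ is obvious. Conversely, given $n \in N$ write $n = \pi_M n + (n - \pi_M n)$; then $\pi_M n \in M \subseteq N$ and $n - \pi_M n \in M^{\ast}$, and since $n$ and $\pi_M n$ both lie in $N$, so does $n - \pi_M n$, placing it in $M^{\ast} \wedge N$. Because the decomposition is orthogonal, the sum $M + (M^{\ast} \wedge N)$ is automatically closed (a Cauchy sequence in the sum splits, by Pythagoras, into Cauchy sequences in each summand), so $M \vee (M^{\ast} \wedge N) = M + (M^{\ast} \wedge N) = N$. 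The main (and only really substantive) obstacle is ensuring that this Hilbert-space argument keeps us inside the $\A$-invariant world, which is guaranteed once the closure step has been carried out; no appeal to Proposition \ref{prop:invariant_subspaces_are_annihilated} is needed here, although it would give an alternative route via the projection $\pi_M \in \A'$.
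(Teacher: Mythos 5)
Your proof is correct and follows essentially the same route as the paper: the same closure arguments (self-adjointness of $\A$ for $M^{\ast}$, continuity for $\overline{M+N}$) and the same orthogonal-decomposition argument for the orthomodular law, which the paper phrases as \glqq $M^{\ast}\cap N$ is the orthogonal complement of $M$ inside the Hilbert space $N$\grqq\ and you unpack via $n = \pi_M n + (n-\pi_M n)$.
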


\begin{defi} \label{def:XA}
We denote the OML described in \autoref{prop:invariant_subspaces_form_omls} by $X(\A)$.
\end{defi}

\begin{proof}[Proof of \autoref{prop:invariant_subspaces_form_omls}]
Intersections and closed sums of $\A$-invariant closed subspaces are again closed $\A$-invariant subspaces. This is proven by standard arguments. It is furthermore clear that $\overline{M + N}$ and $M \cap N$ are the least resp. greatest subspaces containing resp. contained in $M, N$.

It is also clear from the definition that $M \mapsto M^{\ast}$ is antitone as soon as we know that $M^{\ast}$ is closed and $\A$-invariant whenever $M$ is:

If $M$ is closed and $\A$-invariant, $M^{\ast}$ is clearly closed. Let $y \in M^{\ast}$ and $\varphi \in \A$, then for all $x \in M$ we have $\left\langle x, \varphi y \right\rangle = \left\langle \varphi^{\ast} x, y \right\rangle = 0$ because $\varphi^{\ast}x \in M$ due to $\A$ being $\ast$-closed and $M$ being $\A$-invariant.

It is well-known that $(M^{\ast})^{\ast}$ whenever $M \subseteq \Hil$ is closed. It is also known that $M^{\ast}$ complements $M$, i.e. $M \cap M^{\ast} = 0$ and $M \oplus M^{\ast} = \Hil$.

It remains to show \autoref{eq:orthomodular_law}:

Let $M \subseteq N$ be $\A$-invariant and closed. $N$ can be regarded as a Hilbert space in its own right and it is easily seen that $M^{\ast} \cap N$ is the orthogonal complement of $M$ within the Hilbert space $N$. Therefore, $M \oplus (M^{\ast} \cap N) = N$ which is exactly \autoref{eq:orthomodular_law} in this framework.
\end{proof}

\subsection{Right \texorpdfstring{$\ell$}{l}-groups}

As before, we first present the topic of this subsection:

\begin{defi}
A \emph{right-ordered group} is a group $G$, equipped with a partial order - denoted by $\leq$ - which is invariant under right-multiplication, meaning that the implication
\[
h_1 \leq h_2 \Rightarrow h_1g \leq h_2g
\]
is valid for all $g,h_1,h_2 \in G$.

If $G$ is right-ordered, we call it \emph{right lattice-ordered}, if $G$ becomes a lattice under $\leq$. We then say, $G$ is a \emph{right lattice-ordered group}, or, for short, a \emph{right $\ell$-group}.
\end{defi}

By a homomorphism of right-ordered groups we will mean a monotone homomorphism of groups. By a homomorphism of right $\ell$-groups we will mean a homomorphism of the underlying groups which also respects the lattice operations.

It is well-known that right-ordered groups can be described purely in algebraic terms. To give the description, we need the notion of a \emph{positive} resp. \emph{negative cone}:

If $G$ is right-ordered, we define its \emph{positive cone} as
\[
G^+ := \left\{ g \in G : g \geq e \right\}
\]
and, similarly, its \emph{negative cone} as
\[
G^- := \left\{ g \in G : g \leq e \right\}.
\]

Furthermore, for an arbitrary group $G$, we call a sub\emph{monoid} $P \subseteq G$ \emph{pure} if $P \cap P^{-1} = 1$ where $P^{-1} = \left\{g^{-1} : g \in P \right\}$. 

The following proposition is fundamental in the theory of right-ordered groups:

\begin{prop} \label{prop:pure_submonoids_are_cones}
Let $G$ be a group. For any right-order on $G$, the positive cone $G^+$ is a pure submonoid of $G$.

Vice versa, each pure submonoid $P \subseteq G$ gives rise to a right-order with $G^+ = P$ via the rule $g \geq h : \Leftrightarrow gh^{-1} \in P$.
\end{prop}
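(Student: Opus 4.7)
The plan is a direct verification in both directions, matching the algebraic properties that define a pure submonoid (containment of $e$, closure under products, and $P \cap P^{-1} = \{e\}$) with the four order axioms (reflexivity, transitivity, antisymmetry, and right-invariance). No single step should present a serious obstacle; the whole statement is really a bookkeeping argument about translating between orders and cones.

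For the first direction, I would start from a right-ordered group $G$ and check that $G^+$ is a pure submonoid. The identity lies in $G^+$ by reflexivity. For closure, if $g, h \in G^+$ then $g \geq e$; right-multiplying by $h$ yields $gh \geq h$, and transitivity with $h \geq e$ gives $gh \geq e$. For purity, suppose $g \in G^+ \cap (G^+)^{-1}$, so that $g \geq e$ and $g^{-1} \geq e$; right-multiplying the latter inequality by $g$ yields $e \geq g$, and antisymmetry forces $g = e$.

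For the converse, given a pure submonoid $P \subseteq G$, I would define the relation by $g \geq h :\Leftrightarrow gh^{-1} \in P$ and check the four axioms. Reflexivity comes from $e = g g^{-1} \in P$. Transitivity follows because if $gh^{-1}, hk^{-1} \in P$, their product $gk^{-1}$ is in $P$ too. Antisymmetry is where purity enters: if $gh^{-1} \in P$ and $hg^{-1} = (gh^{-1})^{-1} \in P$, then $gh^{-1} \in P \cap P^{-1} = \{e\}$, so $g = h$. Right-invariance is immediate since $(gk)(hk)^{-1} = gh^{-1}$ for any $k$. Finally, $g \in G^+$ iff $ge^{-1} = g \in P$, so $G^+ = P$, which closes the correspondence.

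The only thing that requires a bit of care is to make sure that the two constructions really are mutually inverse, but this is essentially forced: the rule $g \geq h \Leftrightarrow gh^{-1} \in G^+$ characterizes the original order once one notes that $g \geq h$ is equivalent (by right-multiplying by $h^{-1}$) to $gh^{-1} \geq e$. Thus the hardest \emph{conceptual} part is just recognizing that purity is exactly the algebraic shadow of antisymmetry under the cone/order dictionary.
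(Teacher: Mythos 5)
Your proof is correct and complete: both directions are the standard cone/order dictionary, and every step (closure via right-invariance plus transitivity, purity versus antisymmetry, recovery of the original order from $gh^{-1}\geq e$) checks out. The paper itself gives no argument here, deferring to \cite[Theorem 1.5.1]{kopytov}, and your direct verification is precisely the standard proof found in that reference, so there is nothing of substance to compare.
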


\begin{proof}
See \cite[Theorem 1.5.1.]{kopytov}.
\end{proof}

We can now present a class of right $\ell$-groups investigated by Rump in \cite{rump_goml}.

With a partial monoid we can always associate both a \emph{structure monoid} as a \emph{structure group}:

\begin{defi}
Let $X$ be a partial monoid with operation $\oplus$:

The \emph{structure monoid} $S(X)$ is a monoid, together with a mapping $\iota_S: X \to S(X)$ such that the following holds:
\begin{enumerate}[i)]
\item $\iota_S$ is a homomorphism of partial monoids, where $S(X)$ is seen as a partial monoid in the obvious way,
\item if $f: X \to M$ is such a homomorphism to another monoid $M$ there is a \emph{unique} homomorphism of monoids $\bar{f}: S(X) \to M$ such that $f = \iota_S \circ \bar{f}$.
\end{enumerate}

Analogously, the \emph{structure group} $G(X)$ is a group, together with a mapping $\iota_G: X \to G(X)$ such that the following holds:
\begin{enumerate}[i)]
\item $\iota_G$ is a homomorphism of partial monoids, where $G(X)$ is seen as a partial monoid in the obvious way,
\item if $f: X \to G$ is such a homomorphism to another group $G$ there is a \emph{unique} homomorphism of groups $\bar{f}: G(X) \to G$ such that $f = \iota_G \circ \bar{f}$.
\end{enumerate}
\end{defi}

It is easy to see that the structure monoids and groups of a partial monoid are unique and do always exist.

Uniqueness is in both cases a mere categorial fact. Furthermore, $S(X)$ can be constructed as the universal monoid generated by the nonzero elements of $X$ under the relations $x \oplus y = z$ which are already holding in $X$. $G(X)$ is constructed in the same way.

\begin{rema}
It should be noted that it will often happen that the map $\iota_G$ is not injective. It will not even suffice to assume that $S(X)$ is cancellative: the first example of a cancellative monoid which is not embeddable into any group has been constructed by Malcev \cite[§2]{malcev}.
\end{rema}

Using the universality of $\iota_S$ we immediately see that there is a unique homomorphism of monoids $i:S(X) \to G(X)$ with $\iota_G = i \circ \iota_S$.

\begin{defi}
If $X$ is an OML, we define its \emph{structure monoid} $S(X)$ as the structure monoid of the partial monoid $(X,\oplus)$, where $\oplus$ is defined as in \autoref{prop:omls_are_partial_monoids}.

Similarly, we define its \emph{structure group} as the structure group of the partial monoid $(X,\oplus)$.
\end{defi}

Rump discovered that structure groups of OMLs can be right lattice-ordered in a natural way:

\begin{thm} \label{thm:structure_groups_are_l_groups}
\cite[Corollary 4.6]{rump_goml} Let $X$ be an OML. Then $i(S(X))$ is the positive cone of a right lattice-order on $G(X)$. Furthermore, $\iota_G:X \to G(X)$ embeds $X$ as an interval of $G(X)$ under this partial order.
\end{thm}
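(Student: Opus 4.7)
The plan is to split the statement into three parts: (i) the submonoid $i(S(X)) \subseteq G(X)$ is pure, which by \autoref{prop:pure_submonoids_are_cones} produces a right-invariant order on $G(X)$ with positive cone $i(S(X))$; (ii) this order is in fact a lattice order; and (iii) $\iota_G$ identifies $X$ with the order-interval $[1_{G(X)}, \iota_G(1_X)]$.

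Part (i) I would attack by exploiting the fact that the defining relations $x \oplus y = x \vee y$ (valid whenever $x \perp y$) are \emph{length-preserving}, so length descends to a well-defined function $\ell: S(X) \to \mathbb{N}$. Purity then amounts to showing that whenever $s_1 s_2 = 1_{G(X)}$ for $s_1,s_2 \in S(X)$, one must have $s_1 = s_2 = 1$. I would induct on $\ell(s_1) + \ell(s_2)$ and analyse which elementary applications of defining relators could witness the cancellation of the rightmost generator of $s_1$ against the leftmost of $s_2$, using \eqref{eq:oml} to rule out any nontrivial coincidence. Part (iii) is then largely bookkeeping: once the right-order is in place and restricts, via $\iota_G$, to the original order of $X$ (monotonicity follows from $\iota_G$ being a partial-monoid homomorphism), one uses the same normal-form analysis as in (i) to show that any $g \in G(X)$ with $1_{G(X)} \le g \le \iota_G(1_X)$ must lie in $\iota_G(X)$.

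The hard part is clearly (ii). A priori, nothing in a presentation by generators and relations forces meets or joins to exist, so some genuine input from the OML axioms is required. The idea I would pursue is to use the \emph{self-similarity} of an OML — every principal down-set $[0,a] \subseteq X$ is again an OML, with orthocomplementation $y \mapsto y^{\ast} \wedge a$ — to define meets in $G(X)$ recursively. On generators one sets $\iota_G(x) \wedge \iota_G(y) := \iota_G(x \wedge y)$; for a generator against a longer product $\iota_G(x) \wedge \iota_G(y_1)\cdots\iota_G(y_n)$ one peels off $\iota_G(y_1)$ by right-invariance and descends into the OML $[0, y_1^{\ast}]$, iterating until one is back in the generator-vs-generator case.

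The real obstacle is then proving that this recursion is \emph{coherent}, i.e.\ independent of the chosen factorization of the longer word and compatible with the defining relations $x \oplus y = x \vee y$. This is precisely the coherence theorem that Rump establishes in the framework of L-algebras / cycle sets in \cite{rump_goml}: it is the orthomodular law \eqref{eq:oml} that makes the two possible routes through the recursion agree. Once coherence is granted, right-invariance of the order propagates the meet from $i(S(X))$ to all of $G(X)$, joins are then obtained dually (or via $g \vee h = (g^{-1} \wedge h^{-1})^{-1}$ applied after a translation into the positive cone), and the lattice structure is complete.
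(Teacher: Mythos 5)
The paper does not actually prove this statement: it is imported wholesale from \cite[Corollary 4.6]{rump_goml}, so there is no internal argument to compare yours against, and your sketch must stand on its own. It does not, for two concrete reasons. First, the length function in part (i) does not exist. The defining relations of $S(X)$ are $x \cdot y = x \vee y$ for $x \bot y$; the left-hand side is a word of length $2$ and the right-hand side a word of length $1$, so these relations are \emph{not} length-preserving and word length does not descend to $S(X)$. The natural repair --- a weight $w$ on the nonzero elements of $X$ with $w(x) + w(y) = w(x \vee y)$ for $x \bot y$ and $w \geq 1$ --- is a strictly positive finitely additive $\mathbb{N}$-valued measure, which forces $X$ to have finite height: it already fails for the Boolean algebra of subsets of $\mathbb{N}$ (split $\mathbb{N}$ into $n$ orthogonal nonempty pieces to get $w(\mathbb{N}) \geq n$ for every $n$) and for every infinite-dimensional projection lattice $X(\A^{\prime})$ that this paper is actually concerned with. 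Moreover, even granting a grading on $S(X)$, purity of $i(S(X))$ is a statement about cancellation in $G(X)$, not in $S(X)$; as Remark 3 of the paper stresses via Malcev's example, normal-form arguments in a monoid do not transfer to its enveloping group, so your induction on ``which relators witness the cancellation'' presupposes a solution of the word problem in $G(X)$ that you have not supplied. Part (iii) inherits the same difficulty.

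Second, part (ii) is circular. The recursion through the sub-OMLs $\left[0,a\right]$ is indeed the right intuition --- it is exactly the self-similarity on which Rump's L-algebra machinery runs --- but you explicitly discharge the only hard step, the coherence of that recursion, to ``the coherence theorem that Rump establishes in \cite{rump_goml}.'' The statement you are asked to prove \emph{is} Rump's Corollary 4.6, and his proof consists precisely of that coherence/self-similarity analysis; invoking it as a black box reduces the theorem to itself. A genuine proof would have to show, using \eqref{eq:oml}, that the two routes through your recursion agree and that the resulting meet is well defined on $G(X)$ and not merely on words; that verification is the entire content of the result, not an afterthought.
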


\begin{rema}
The reader familiar with Rump's original article \cite{rump_goml} may have noted slight differences in this exposition. We explain these:
\begin{enumerate}[a)]
\item Rump originally defined the structure monoid $S(X)$ by the partial monoid $(X,\sqcap)$ which, however, is isomorphic to $(X,\oplus)$ via the isomorphism $x \mapsto x^{\ast}$. Furthermore, $S(X)$ is embedded as the \emph{negative} cone into $G(X)$ which does not influence the lattice-orderability of the subject. This is justified by the connection of his article to quantum logic: in this context, tautological truth, amounting to the element $1 \in X$, \emph{should} be identified with $e \in G$.

In this article we chose to \glqq turn things around\grqq\ because we found that the definitions are easier accessible when we define $S(X)$ as the positive cone whose multiplication is defined by adding orthogonal elements in $X$.
\item Our definitions of $S(X)$ and $G(X)$ by universal properties are actually a corollary in Rump's article. For we are interested in \emph{special} structure monoids resp. groups, we also decided in favour of the easier characterizations.
\end{enumerate}
\end{rema}

Rump also proved a converse to this theorem. To explain this result, we first need to introduce some special elements of a right $\ell$-group.

\begin{defi}
Let $G$ be a right $\ell$-group. Let $\Delta > e$. We call $\Delta$
\begin{enumerate}[a)]
\item \emph{normal} if $\Delta(g \vee h) = \Delta g \vee \Delta h$ holds for any $g,h \in G$ (that is, \emph{left}-multiplication by $\Delta$ is isotone), 
\item \emph{singular} if for $e \leq x,y \leq \Delta$, we always have the implication
\[
xy \leq \Delta \Rightarrow yx = x \vee y,
\]
\item a \emph{singular strong order unit} if $\Delta$ is normal and singular and, furthermore, for any $g \in G$ there exists $k \in \mathbb{Z}$ with $g \leq \Delta^k$.
\end{enumerate}
\end{defi}

\begin{rema}
\begin{enumerate}[1)]
\item In the classical theory of $\ell$-groups (without the predicate \emph{left} or \emph{right}), the concept of strong order unit already exists, see \cite[Definition 7.4.]{darnel}. There is also the possibility of defining \emph{weak} order units as elements $g\in G^+$ such that $g \wedge h \neq e$ for all $h \in G^+$ (see \cite[54.3.]{darnel}) but it is not known to the author if this concept is useful in the one-sided theory.
\item We \emph{might} define singularity by the implication $xy \leq \Delta \Rightarrow xy = x \vee y$. It turns out that, using this definition, the theory of right $\ell$-groups with singular strong order unit goes in a slightly different direction - see \cite[Section 5]{DRZ}.

We use this opportunity to note that singularity is also present in the classical theory, see \cite[Definition 6.9.]{darnel}, for example.
\end{enumerate}
\end{rema}

Now we can state the promised converse to \autoref{thm:structure_groups_are_l_groups}:

\begin{thm} \label{thm:which_groups_are_structure_groups}
\cite[Theorem 4.10]{rump_goml} For any OML $X$, the image $\iota_G(1)$ is a singular strong order unit in the right $\ell$-group $G(X)$.

Conversely, if $G$ is a right $\ell$-group with singular strong order unit $\Delta$, the interval $\left[ e, \Delta \right]$ becomes an OML under the lattice operations inherited by $G$ and the orthocomplementation defined by $g^{\ast} = g^{-1}\Delta$.

The embedding $\left[ e, \Delta \right] \hookrightarrow$ identifies $G$ as a structure group for the OML $\left[ e, \Delta \right]$.
\end{thm}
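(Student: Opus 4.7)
The plan is to split the statement into three parts and treat them in turn: (A) the forward direction, that $\iota_G(1)$ is a singular strong order unit in $G(X)$; (B) the backward direction, that the interval $[e,\Delta]$ inside any right $\ell$-group with SSOU $\Delta$ carries a natural OML structure under the stated operations; and (C) that the inclusion $[e,\Delta] \hookrightarrow G$ realises $G$ as the structure group of this OML.

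For (A), I would exploit \autoref{thm:structure_groups_are_l_groups}, which embeds $X$ as the interval $[e,\Delta]$ in $G(X)$ with $\Delta := \iota_G(1)$. Since $S(X)$ is generated as a monoid by $\iota_G(X) \subseteq [e,\Delta]$, every positive element of $G(X)$ lies below some $\Delta^n$; writing arbitrary elements as $gh^{-1}$ with $g,h \geq e$ then yields the strong-order-unit property. For normality, left multiplication by $\Delta$ sends each generator $\iota_G(x)$ to $\iota_G(x^{\ast})^{-1}$ (because $\iota_G(x)\iota_G(x^{\ast}) = \iota_G(1) = \Delta$), so on generators it factors through the antitone orthocomplementation and hence respects joins; right invariance propagates this to all of $G$. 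For singularity, if $x,y \in [e,\Delta] = \iota_G(X)$ with $xy \leq \Delta$, then $xy \in \iota_G(X)$, and unwinding the defining relations of the partial monoid $(X,\oplus)$ forces the preimages $\bar x, \bar y \in X$ to satisfy $\bar x \bot \bar y$; partial commutativity of $\oplus$ then gives $yx = \iota_G(\bar y \vee \bar x) = x \vee y$.

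For (B), equip $X := [e,\Delta]$ with the lattice inherited from $G$ and with $g^{\ast} := g^{-1}\Delta$. The first, and most delicate, task is to establish that $\Delta$ commutes with every $g \in [e,\Delta]$, since this is exactly what makes $g \mapsto g^{\ast}$ an involution. My plan is to apply singularity to the pair $(g, g^{-1}\Delta)$, whose product is $\Delta$, to obtain $g^{-1}\Delta \cdot g = g \vee g^{-1}\Delta$, and then to combine this with normality to extract $\Delta g = g\Delta$ on $[e,\Delta]$. With centrality in hand, $g \wedge g^{\ast} = e$ and $g \vee g^{\ast} = \Delta$ follow from singularity applied to $(g,g^{\ast})$ together with the product identity $g \cdot g^{\ast} = \Delta$. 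For the orthomodular law $x \leq y \Rightarrow x \vee (x^{\ast} \wedge y) = y$, the idea is to translate the join and meet into products via normality, apply singularity to collapse a product into a join, and reduce to a cancellation in $G$.

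For (C), the inclusion $\iota : [e,\Delta] \hookrightarrow G$ is a homomorphism of partial monoids because singularity yields $xy = x \vee y$ precisely when $x \top y$ (equivalently $xy^{\ast} \leq \Delta$). For universality, note that $\Delta$ being a strong order unit implies that $[e,\Delta]$ generates $G$ as a group, so any gvm $\mu : [e,\Delta] \to H$ admits at most one extension to a group homomorphism $\bar\mu : G \to H$; existence reduces to verifying compatibility with the relations $xy = x \vee y$ of $(X,\oplus)$, which is exactly the gvm condition on $\mu$. The main obstacle is part (B): both the centrality of $\Delta$ on the interval and the verification of the orthomodular law require a careful interplay between normality and singularity, and this is where the bulk of the technical work resides.
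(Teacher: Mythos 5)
The paper offers no proof of this statement: it is quoted verbatim from Rump, \cite[Theorem 4.10]{rump_goml}, and used as a black box, so the expected ``proof'' here is the citation. Your attempt to reconstruct the argument is a reasonable outline and some pieces are genuinely executable --- in particular, your plan for centrality of $\Delta$ on $[e,\Delta]$ works: applying singularity to $(g, g^{-1}\Delta)$ gives $g^{-1}\Delta g = g \vee g^{-1}\Delta \leq \Delta$, and a second application to the reversed pair gives $\Delta = g \cdot g^{-1}\Delta = g \vee g^{-1}\Delta$, whence $\Delta g = g\Delta$.

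However, there are genuine gaps at the two points where the real content of Rump's theorem lives. First, in part (A), the claim that $xy \leq \Delta$ with $x = \iota_G(\bar x)$, $y = \iota_G(\bar y)$ forces $\bar x \bot \bar y$ by ``unwinding the defining relations'' presupposes control over the word problem in $S(X)$: the monoid is given by generators and relations, and showing that a product of two generators lies in $\iota_G(X)$ \emph{only} via a relation $\bar x \oplus \bar y = \bar z$ requires a normal-form or confluence argument (this is the combinatorial heart of \cite{rump_goml}, not a formal consequence of the presentation). Second, in part (C), generation of $G$ by $[e,\Delta]$ gives uniqueness of $\bar\mu$, but existence requires that \emph{every} relation holding in $G$ among elements of $[e,\Delta]$ be a consequence of the partial-monoid relations --- that is precisely the assertion that $G$ is the structure group, so your reduction restates the goal rather than proving it. Two smaller issues: the bound $\iota_G(x_1)\cdots\iota_G(x_n) \leq \Delta^n$ already uses isotonicity of left multiplication by $\Delta$, so normality must be established \emph{before} the strong-order-unit property, whereas your sketch orders them the other way and leaves normality itself essentially unargued (the map $g \mapsto \Delta g$ must respect joins of \emph{arbitrary} pairs, and not every pair is a right translate of a pair of generators); and the step ``$gh^{-1} \leq g$ for $h \geq e$'' is false in a merely right-ordered group, since it amounts to positivity of the conjugate $ghg^{-1}$ --- use $g \leq g \vee e \in G^+$ instead.
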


\section{The paraunitary group of an involutive Banach algebra} \label{sec:paraunitary_groups}

We give an overview on the basic definitions regarding $C^{\ast}$-algebras. Details can be found, for example, in \cite{dixmier_c_star}.

Let $\K \in \{ \R , \C \}$.

A \emph{Banach algebra} is a Banach space $(\A, \Vert . \Vert)$ (complex or real) with an associative, continuous bilinear map $\cdot: \A \times \A \to \A$; $(x,y)\mapsto xy$ which is submultiplicative, i.e. $\Vert xy \Vert \leq \Vert x \Vert \cdot \Vert y \Vert$. If there is a neutral element $1$ for this multiplication, we call it \emph{unital}.

An \emph{involution} on a Banach algebra is a map $\ast: \A \to \A ; x \mapsto x^{\ast}$ which is
\begin{enumerate}[i)]
\item an involution in the classical sense: $(x^{\ast})^{\ast} = x$ for all $x \in A$,
\item antilinear: for all $x,y \in \A$, $\alpha \in \K$,
\begin{align*}
(x + y)^{\ast} & = x^{\ast} + y^{\ast} \\
(\alpha x)^{\ast} & = \overline{\alpha} x^{\ast}
\end{align*}
(where the latter reduces to mere linearity when $\K = \R$),
\item an antihomomorphism: $(xy)^{\ast} = y^{\ast}x^{\ast}$ for all $x,y \in \A$,
\item norm-preserving: $\Vert x^{\ast} \Vert = \Vert x \Vert$.
\end{enumerate}

A Banach algebra with an involution is called a \emph{Banach-$\ast$-algebra}.

If additionally, we have $\Vert x x^{\ast} \Vert = \Vert x^{\ast}x \Vert = \Vert x \Vert^2$ for all $x \in \A$, we call $\A$ an $C^{\ast}$-algebra.

Note that every von Neumann algebra is a $C^{\ast}$-algebra under the operator norm and involution given by taking adjoints.

Throughout this section, $\A$ will denote an arbitrary Banach $\ast$-algebra (over $\R$ or $\C$)

We denote by $\A \laur$ the set of formal expressions
\[
\varphi = \sum_{i=-\infty}^{\infty}t^i \varphi_i
\]
with $\varphi_i \in \A$ and $\varphi_i \neq 0$ for only finitely many $i$. Furthermore, $\A \polyp$ (resp. $\A \polym$) will denote the subsets consisting of all $\varphi \in \A \laur$ with $\varphi_i = 0$ for $i < 0$ ($i > 0$).

There is a natural way of defining a multiplication on $\A \laur$, extending that of $\A$: for $\varphi, \psi \in \A \laur$, we set
\[
\varphi \psi = \sum_{i = -\infty}^{\infty} t^i \left( \sum_{k = -\infty}^{\infty} \varphi_k \psi_{i-k} \right).
\]
This is just the ring of finite Laurent series over $\A$ which clearly is associative.

The involution $\ast$ can be extended to elements of $\A\laur$ by the rule
\[
\varphi^{\ast} = \sum_{i = -\infty}^{\infty}t^{-i}a_i^{\ast}.
\]
Clearly, $\ast$ is an involution and an anti-homomorphism of $\A\laur$, therefore making the latter an involutive $\K$-algebra\footnote{Which, however, is \emph{not} a Banach $\ast$-algebra, due to a lack of completeness}. Restricting $\ast$ from $\A\laur$ to the subalgebra $\A$, we get the familiar involution on the latter, therefore there will be no danger of confusion when using the same symbol \glqq $\ast$\grqq\ for these operators.

Note that $\A\polyp^{\ast} = \A\polym$ and $\A\polym^{\ast} = \A\polyp$.

There is a canonical specialization map
\begin{align*}
\varepsilon_1: \A \laur & \to \A \\
\varphi & \mapsto \sum_{i = -\infty}^{\infty} \varphi_i 
\end{align*}
which amounts to setting $t = 1$. The reader can easily convince himself that $\varepsilon_1$ respects the involutive algebra structures in the sense that it is a homomorphism of algebras which additionally fulfils $\varepsilon_1(\varphi^{\ast}) = \varepsilon_1(\varphi)^{\ast}$.

We can now come to our main definitions:

With $\A$ we can associate its \emph{unitary group} which we define as
\[
U(\A) := \left\{ x \in \A: x^{\ast}x = xx^{\ast} = 1 \right\}.
\]
This is easily seen to become a group under the multiplication of $\A$.

Similarly, we can associate quite another group with $\A$, namely, its \emph{paraunitary group}
\begin{equation} \label{eq:def_of_pua}
\PU(\A) := \left\{ \varphi \in \A\laur : \varphi^{\ast} \varphi = \varphi \varphi^{\ast} = 1 \right\}.
\end{equation}

Let $\varphi \in \PU(\A)$. We then have
\[
\varepsilon_1(\varphi)^{\ast}\varepsilon_1(\varphi) = \varepsilon_1(\varphi^{\ast})\varepsilon_1(\varphi) = \varepsilon_1(\varphi^{\ast}\varphi) = \varepsilon_1(1) = 1.
\]
Similarly, one shows $\varepsilon_1(\varphi)\varepsilon_1(\varphi)^{\ast} = 1$, thus proving that $\varepsilon_1(\varphi) \in U(\A)$.

On the other hand, we have a canonical embedding of rings
\begin{align*}
\iota: \A & \to \A\laur \\
x & \mapsto t^0 x
\end{align*}
which is a morphism of rings and clearly commutes with the respective $\ast$-operations.

From this one easily sees that $\iota(x) \in \PU(\A)$ whenever $x \in U(\A)$ and that these elements comprise the totality of constant paraunitary polynomials.

Furthermore, $\varepsilon_1 \iota = 1_{\A}$ which proves

\begin{prop} \label{prop:split_exact_sequence_PUA}
There is a split exact sequence of groups
\[
1 \longrightarrow \ker \varepsilon_1 \longrightarrow \PU(\A) \overset{\varepsilon_1}{\longrightarrow} U(\A) \longrightarrow 1.
\]
with $\iota$ as a right inverse to $\varepsilon_1$.
\end{prop}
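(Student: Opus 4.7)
The plan is to check the three standard ingredients for a split short exact sequence: that the middle map is a homomorphism with the asserted target, that the asserted splitting really is one, and that the kernel is what it should be (which is tautological once we have the homomorphism).

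First I would verify that $\varepsilon_1$ restricts to a group homomorphism $\PU(\A)\to U(\A)$. That $\varepsilon_1$ sends $\PU(\A)$ into $U(\A)$ has already been shown in the paragraph preceding the proposition via the identity $\varepsilon_1(\varphi^{\ast}\varphi)=\varepsilon_1(\varphi)^{\ast}\varepsilon_1(\varphi)$. Multiplicativity on $\PU(\A)$ is inherited from the fact that $\varepsilon_1:\A\laur\to\A$ is a ring homomorphism (it is the evaluation at $t=1$, which is obviously $\K$-linear and multiplicative on finite sums by the standard Cauchy-product computation). In particular $\varepsilon_1(1)=1$, so its restriction to the subgroup $\PU(\A)$ of the unit group of $\A\laur$ is a group homomorphism.

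Next I would establish the splitting. The embedding $\iota:\A\to\A\laur$, $x\mapsto t^0 x$, is a $\ast$-algebra homomorphism, so if $x\in U(\A)$ then $\iota(x)^{\ast}\iota(x)=\iota(x^{\ast}x)=\iota(1)=1$ and likewise on the other side, so $\iota(x)\in\PU(\A)$. Thus $\iota$ restricts to a group homomorphism $U(\A)\to\PU(\A)$. Moreover $\varepsilon_1(\iota(x))=\varepsilon_1(t^0 x)=x$, so $\varepsilon_1\circ\iota=1_{U(\A)}$. This simultaneously shows that $\varepsilon_1:\PU(\A)\to U(\A)$ is surjective and that $\iota$ is a right inverse.

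Finally, exactness: by definition the kernel of $\varepsilon_1:\PU(\A)\to U(\A)$ is $\ker\varepsilon_1\cap\PU(\A)$, which is exactly the left-hand term as written (read inside $\PU(\A)$). Together with the surjectivity from the previous step, this gives the split exact sequence
\[
1\longrightarrow\ker\varepsilon_1\longrightarrow\PU(\A)\xrightarrow{\varepsilon_1}U(\A)\longrightarrow 1
\]
with splitting $\iota$. There is no real obstacle here; the only thing to be careful about is to distinguish the three incarnations of $\varepsilon_1$ (on $\A\laur$, on $\PU(\A)$, and on its image in $U(\A)$) and to invoke the fact, already recorded in the text, that $\varepsilon_1$ respects the involution so that the paraunitarity condition specializes to unitarity.
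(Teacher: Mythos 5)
Your proposal is correct and follows essentially the same route as the paper, which establishes the proposition by exactly the observations you list: $\varepsilon_1$ maps $\PU(\A)$ into $U(\A)$ because it respects the involution, $\iota$ maps $U(\A)$ into $\PU(\A)$, and $\varepsilon_1\iota = 1_{\A}$ gives the splitting and surjectivity. No further comment is needed.
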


We can now introduce the main character of this section:

\begin{defi}
The \emph{pure paraunitary group} associated with $\A$ is the group $\PPU(\A):= \ker \varepsilon_1$.
\end{defi}

By a classical group theoretic argument one deduces from \autoref{prop:split_exact_sequence_PUA} the
\begin{cor}
There is a semidirect product decomposition $\PU(\A) = \PPU(\A) \rtimes U(\A)$ where we identify $U(\A)$ with the constant paraunitary polynomials.
\end{cor}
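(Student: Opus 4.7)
The plan is to invoke the standard group-theoretic principle that a split short exact sequence $1 \to N \to G \overset{\pi}{\to} H \to 1$ with section $s: H \to G$ yields an internal semidirect product decomposition $G = N \rtimes s(H)$. Here we take $N = \PPU(\A) = \ker \varepsilon_1$, $G = \PU(\A)$, $H = U(\A)$, $\pi = \varepsilon_1$, and $s = \iota$, all of which are supplied by \autoref{prop:split_exact_sequence_PUA}.

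Concretely, I would verify the two properties needed for an internal semidirect product: normality of $\PPU(\A)$ in $\PU(\A)$, and that every $\varphi \in \PU(\A)$ is uniquely expressible as a product $n \cdot \iota(u)$ with $n \in \PPU(\A)$ and $u \in U(\A)$. Normality is immediate since $\PPU(\A)$ is the kernel of the group homomorphism $\varepsilon_1$. For the factorization, given $\varphi \in \PU(\A)$, set $u := \varepsilon_1(\varphi) \in U(\A)$ and $n := \varphi \cdot \iota(u)^{-1}$. Applying $\varepsilon_1$ and using $\varepsilon_1 \iota = 1_{\A}$ gives $\varepsilon_1(n) = u \cdot u^{-1} = 1$, so $n \in \PPU(\A)$, and $\varphi = n \cdot \iota(u)$. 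Uniqueness follows from $\PPU(\A) \cap \iota(U(\A)) = 1$: any element of $\iota(U(\A))$ is a constant polynomial, and if it lies in $\ker \varepsilon_1$, applying $\varepsilon_1$ forces it to equal $1$.

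Finally, the identification of $\iota(U(\A))$ with the constant paraunitary polynomials was already established in the paragraph following the definition of $\PU(\A)$, so no further work is needed there. There is no real obstacle to this proof: the content is entirely formal, and the statement is essentially a restatement of \autoref{prop:split_exact_sequence_PUA} in the language of semidirect products.
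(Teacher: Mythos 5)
Your proof is correct and follows exactly the paper's route: the paper deduces the corollary from \autoref{prop:split_exact_sequence_PUA} by ``a classical group theoretic argument,'' which is precisely the standard split-exact-sequence-to-semidirect-product argument you spell out. You have merely made explicit the details the paper leaves implicit.
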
 
\begin{rema}
\begin{enumerate}[1)]
\item The designation of the elements of $\ker \varepsilon_1$ as \emph{pure} is due to the following analogy with braid groups (\cite[Section 1.3]{Kassel_braid}):

Denoting the braid group with $n$ strands by $B_n$ and the symmetric group on $n$ elements by $S_n$ there is a canonical specialization map $\varepsilon: B_n \to S_n$ fitting into a short exact sequence
\[
1 \to P_n \rightarrow B_n \overset{\varepsilon}{\rightarrow} S_n
\]
where $P_n := \ker(\varepsilon)$ is the so-called \emph{pure} braid group.
\item $\varepsilon_1$, however, is not the only possibility of defining a pure paraunitary group: for any $z \in \C$ with $|z|=1$ there is a specialization map
\begin{align*}
\varepsilon_z: \A\laur & \to \A \\
\varphi & \mapsto \sum_{i=-\infty}^{\infty}z^i \varphi_i
\end{align*}
which amounts to setting $t = z$. One can show that each $\varepsilon_z$ restricts to a homomorphism $\PU(\A) \to U(\A)$.

If we denote the respective kernels by $\PPU_z(\A)$ one can show that each $\PPU_z(\A)$ are isomorphic to $\PPU(\A)$ by the isomorphism
\begin{align*}
\alpha_z: \PPU(\A) & \to \PPU_z(\A) \\
\sum_{i=-\infty}^{\infty}t^i \varphi_i & \mapsto \sum_{i=-\infty}^{\infty}t^i z^{-i} \varphi_i.
\end{align*}
Therefore, if one knows one member of the family $\PPU_z(\A)$ one knows them all.

It may, however, lead to interesting questions when one applies $\varepsilon_{z^{\prime}}$ to $\PPU_z(\A)$ when $z, z^{\prime}$ are not necessarily equal. For an instance of the case $z=1,z^{\prime}=-1$, see \cite[Section 5.1]{dietzel}.
\end{enumerate}
\end{rema}

The groups $\PPU(\A)$ are ordered in a very natural way:
We can define the subsets $\PPU^+(\A) := \PPU(\A) \cap \A\polyp$ and $\PPU^-(\A) := \PPU(\A) \cap \A\polym$ which have the following pleasant property:

\begin{prop} \label{prop:ppu_plusminus_are_cones}
The subsets $\PPU^+(\A), \PPU^-(\A)$ are the positive respectively negative cone of a right-invariant order on $\PPU(\A)$.
\end{prop}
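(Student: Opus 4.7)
The plan is to invoke \autoref{prop:pure_submonoids_are_cones}, which reduces the claim to showing that $\PPU^+(\A)$ is a pure submonoid of $\PPU(\A)$ and that its associated negative cone $(\PPU^+(\A))^{-1}$ is precisely $\PPU^-(\A)$.

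First, I would verify that $\PPU^+(\A)$ is a submonoid. The identity $1 = t^0 \cdot 1$ obviously lies in $\PPU(\A) \cap \A\polyp$. For closure under multiplication, if $\varphi,\psi \in \PPU^+(\A)$, then $\varphi\psi \in \PPU(\A)$ since $\PPU(\A)$ is a group, and $\varphi\psi \in \A\polyp$ because $\A\polyp$ is a subring of $\A\laur$.

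The key point is purity, which rests on the interaction of the involution with the polynomial decomposition. Since any $\varphi \in \PPU(\A)$ satisfies $\varphi^{-1} = \varphi^{\ast}$, a putative element $\varphi \in \PPU^+(\A) \cap (\PPU^+(\A))^{-1}$ satisfies both $\varphi \in \A\polyp$ and $\varphi^{\ast} \in \A\polyp$. Using the identity $\A\polyp^{\ast} = \A\polym$ established earlier in the section, this forces $\varphi \in \A\polyp \cap \A\polym = \iota(\A)$, so $\varphi = t^0 \varphi_0$ for some $\varphi_0 \in \A$. Applying the specialization $\varepsilon_1$ and using $\varphi \in \PPU(\A) = \ker \varepsilon_1$ yields $\varphi_0 = \varepsilon_1(\varphi) = 1$, hence $\varphi = 1$, as required for purity.

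Finally, the identification of $\PPU^-(\A)$ with the negative cone follows from the same two ingredients: inversion in $\PPU(\A)$ coincides with the involution $\ast$, which by $\A\polyp^{\ast} = \A\polym$ swaps $\PPU^+(\A)$ with $\PPU^-(\A)$ bijectively. No step should present a real obstacle; the only delicate point is remembering that purity is precisely where the involutive structure of $\A\laur$ interacts with the polynomial/Laurent splitting, and that $\PPU(\A)$ consists of elements annihilated by $\varepsilon_1$ is exactly what rules out nontrivial constant terms.
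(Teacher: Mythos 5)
Your proposal is correct and follows essentially the same route as the paper: reduce to purity via Proposition \ref{prop:pure_submonoids_are_cones}, use $\varphi^{-1}=\varphi^{\ast}$ together with $\A\polyp^{\ast}=\A\polym$ to force any element of $\PPU^+(\A)\cap(\PPU^+(\A))^{-1}$ to be constant, and then kill it with $\varepsilon_1$. The identification $(\PPU^+(\A))^{-1}=\PPU^-(\A)$ is also handled exactly as in the paper.
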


\begin{proof}
We first prove that $\PPU^+(\A)$ is a positive cone. By \autoref{prop:pure_submonoids_are_cones}, we have to show that it is a pure submonoid:
First of all, $\PPU^+(\A)$ is a submonoid of $\PPU(\A)$ which comes from the fact that both $\PPU(\A)$ and $\A\polyp$ contain $1$ and are multiplicatively closed in $\A\laur$.

On the other hand, for $\varphi \in \PPU^+(\A)$ we have, by (\ref{eq:def_of_pua}), that $\varphi^{-1} = \varphi^{\ast} \in \PPU^-(\A)$.

Therefore $\PPU^+(\A) \cap \left( \PPU^+(\A) \right)^{-1} = \PPU^+(\A) \cap \PPU^-(\A)$, the latter consisting only of constant paraunitary polynomials, i.e. those of the form $t^0 x$ ($x \in \A$). But these have to fulfil $1 = \varepsilon_1(t^0 x) = x$. Thus $\PPU^+(\A) \cap \left( \PPU^+(\A) \right)^{-1} = 1$.

Finally, from $\left( \PPU^+(\A) \right)^{-1} = \PPU^-(\A)$ one sees that $\PPU^-(\A)$ is the negative cone of the right-invariant order defined by the positive cone $\PPU^+(\A)$.
\end{proof}

Before diving into our analysis of the pure paraunitary groups for general von Neumann algebras we give two motivating examples:

\begin{exam}
\begin{enumerate}[1)]
\item By means of the Gelfand isomorphism, any commutative, unital $C^{\ast}$-algebra $\A$ is isomorphic (in the sense of $C^{\ast}$-algebras) to the $C^{\ast}$-algebra $C(X)$ of complex-valued continuous functions on some compact Hausdorff space $X$, the $\ast$-operation being complex conjugation (\cite[Theorem 1.4.1.]{dixmier_c_star}).

By this isomorphism, each $\varphi \in \A\laur$ can be seen as a family of finite Laurent polynomials $\varphi_x(t) \in \C\laur$ ($x \in X$) such that the coefficients of $\varphi_x(t)$ vary continuously in $x$ and there is a global bound for the degrees of the least and the greatest non-zero coefficients of each $\varphi_x$.

The paraunitarity condition now reads as $\varphi_x(t)\cdot \bar{\varphi}_x(t^{-1}) = 1$ which is only possible when $\varphi_x(t) = z_xt^{n_x}$ with $\vert z_x\vert = 1$ and $n_x \in \mathbb{Z}$. Thus, the coefficients of $\varphi_x(t)$ are either $0$ or have absolute value $1$ from which we infer that $n_x$ must be locally constant, which is equivalent to saying that $n_x$ is continuous when $\mathbb{Z}$ carries the discrete topology.

Setting $t = 1$ in $\varphi_x(t) = z_x t^{n_x}$, one sees that $\varphi_x(t)$ represents an element of $\PPU(\A)$ if and only if $z_x = 1$ for all $x$. In this case, $\varphi_x(t) = t^{n_x}$.

Let $C(X,\mathbb{Z})$ be the (additive) group of continuous functions $n:X \to \mathbb{Z}$. Sending such a function $n$ to the family $\varphi_x(t)=t^{n_x}$ establishes an isomorphism of abstract groups $C(X,\mathbb{Z}) \cong \PPU(\A)$.

The elements of $\PPU^+(\A)$ are then represented by families $\varphi_x(t) = t^{n_x}$ with $n_x \in \mathbb{Z}_0^+$ varying continuously. This shows that the isomorphism in the above paragraph identifies the submonoid $C(X,\mathbb{Z}_0^+)$ of $C(X,\mathbb{Z})$ with $\PPU^+(\A)$.

We conclude that, as a right-ordered group, $\PPU(\A)$ is isomorphic to the additive group $C(X,\mathbb{Z})$ under the pointwise partial order.
\item If $\A$ is a commutative von Neumann algebra, $\A$ is $\ast$-isomorphic to the algebra of essentially bounded, complex-valued, measurable functions on some measure space $(Y,\mu)$ \cite[I.7.3.,Theorem 1]{Dixmier_Neumann}.

By a similar argument as the one given above, we can identify the right-ordered group $\PPU(\A)$ with $L^{\infty}(Y,\mathbb{Z})$, that is, the functions in $L^{\infty}(Y)$ which are integral $\mu$-almost everywhere, and where we define $f \leq g$ if and only if $f(x) \leq g(x)$ $\mu$-almost everywhere.
\end{enumerate}
\end{exam}

\section{The pure paraunitary group of a von Neumann algebra} \label{sec:ppu_is_lattice_ordered}

In this section, $\A$ will always be assumed to be a von Neumann algebra acting on a complex Hilbert space $\Hil$. Both will be fixed throughout this section.

The aim of this section is to give a proof of the following

\begin{thm} \label{thm:PPU_is_lattice_ordered}
If $\A$ is a von Neumann algebra, the right-invariant order on $\PPU(\A)$ defined by the positive cone $\PPU^+(\A)$ makes the former a lattice.
\end{thm}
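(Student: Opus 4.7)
The plan is to embed $\PPU(\A)$ order-reversingly into the lattice of closed subspaces of a suitable Hilbert space, identify the image as a sublattice, and thereby transfer lattice structure back to $\PPU(\A)$. Concretely, let $\tilde{\Hil} := \ell^2(\Z, \Hil)$, equipped with the bilateral shift $T$ and the diagonal action of $\A$. Each finite Laurent series $\varphi = \sum t^i \varphi_i \in \A\laur$ acts as the bounded operator $\sum T^i \varphi_i$ on $\tilde{\Hil}$, and the paraunitary condition (\ref{eq:def_of_pua}) states precisely that this operator is unitary. Writing $\tilde{\Hil}^+ := \ell^2(\Z_{\geq 0}, \Hil)$, an element of $\A\laur$ preserves $\tilde{\Hil}^+$ if and only if it lies in $\A\polyp$. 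For each $\varphi \in \PPU(\A)$, I would set $M_\varphi := \varphi\tilde{\Hil}^+$, a closed subspace of $\tilde{\Hil}$.

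Next I would verify that $\varphi \mapsto M_\varphi$ is injective and order-reversing with respect to the right-invariant order on $\PPU(\A)$ coming from \autoref{prop:ppu_plusminus_are_cones}. Order reversal is immediate: $\varphi \leq \psi$ iff $\psi\varphi^{-1} \in \PPU^+(\A) \subseteq \A\polyp$, iff $\psi\varphi^{-1}\tilde{\Hil}^+ \subseteq \tilde{\Hil}^+$, iff $M_\psi \subseteq M_\varphi$. For injectivity, $M_\varphi = M_\psi$ forces $\eta := \psi^{-1}\varphi \in \PPU(\A)$ and its inverse $\eta^{\ast}$ to both preserve $\tilde{\Hil}^+$, placing $\eta \in \A\polyp \cap \A\polym = \A$; then $\varepsilon_1(\eta) = 1$ collapses $\eta$ to the identity.

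The main content is to recognise the image as a sublattice of the subspace lattice of $\tilde{\Hil}$. Every $M_\varphi$ is closed, $T$-invariant (since $T$ commutes with $\varphi$), $\A^{\prime}$-invariant (since $\A^{\prime}$ commutes with $\varphi$), and sandwiched $T^{N_+}\tilde{\Hil}^+ \subseteq M_\varphi \subseteq T^{-N_-}\tilde{\Hil}^+$ for some $N_{\pm} \geq 0$ depending on the supports of $\varphi$ and $\varphi^{\ast}$. The crux of the argument is the converse Beurling--Lax-type realization theorem: any closed subspace $M \subseteq \tilde{\Hil}$ with these three properties arises as $M_\chi$ for some $\chi \in \PPU(\A)$. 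Granted this, the family $\{M_\varphi\}$ is visibly closed under intersections and closed sums (the three conditions being obviously preserved), hence forms a sublattice, and the order-reversing bijection then carries the resulting meets and joins back to a lattice structure on $\PPU(\A)$ compatible with its right-invariant order.

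The main obstacle is exactly this realization step, where the von Neumann algebra hypothesis should be essential: the projection onto any such subspace $M$ commutes with the diagonal $\A^{\prime}$-action on $\tilde{\Hil}$, so by the bicommutant theorem it is expressible as an $\A$-valued operator matrix on $\ell^2(\Z)$; combined with the $T$-invariance and the finite sandwich condition, this should permit a layer-by-layer decomposition of $M$ into finitely many wandering subspaces which can be reassembled into a paraunitary Laurent polynomial $\chi$ with $\chi\tilde{\Hil}^+ = M$.
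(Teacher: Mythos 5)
Your proposal is essentially the paper's own proof: your map $M_\varphi = \varphi\tilde{\Hil}^{+}$ is the map $\Omega$ of \autoref{sec:ppu_is_lattice_ordered} in mirror image (using $\ell^2(\Z_{\geq 0},\Hil)$ in place of $\Hilm$, whence your order reversal versus the paper's order preservation), your injectivity and order-embedding arguments coincide with \autoref{lem:sending_hilm_to_hilm}, \autoref{prop:omega_is_injective} and \autoref{lem:omega_is_an_embedding}, and your family of shift-invariant, $\A^{\prime}$-invariant, sandwiched closed subspaces is exactly the sublattice $\sub(\Hil)_{-\infty}^{\infty}$. The Beurling--Lax realization you flag as the ``main obstacle'' is precisely what the paper establishes in \autoref{prop:omega_is_surjective}, by the layer-peeling induction you describe, with the elementary factors $p_M = t\pi_M + \pi_{M^{\ast}}$ for $M \in X(\A^{\prime})$ supplied by \autoref{prop:invariant_subspaces_are_annihilated}; the only point needing care is the side on which the order lives ($\psi\varphi^{-1}$ versus $\varphi^{-1}\psi$ in your chain of equivalences), a harmless slip since inversion interchanges the two one-sided orders with the same positive cone.
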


\subsection{Preliminary constructions}

We begin by constructing several Hilbert spaces for the rings $\A^{\prime}\laur, \A^{\prime} \polyp, \A^{\prime}\polym$ to act on.

First of all, we set
\[
\Hill := \left\{ x = \sum_{i = -\infty}^{\infty} t^i x_i : x_i \in \Hil, \sum_{i = -\infty}^{\infty} \Vert x_i \Vert^2 < \infty \right\}
\]

$\Hill$ becomes a Hilbert space as follows:

For $x = \sum_{i = -\infty}^{\infty} t^i x_i$ and $y = \sum_{i = -\infty}^{\infty} t^i y_i$ in $\Hill$ we define their product as
\begin{equation} \label{eq:hill_inner_product}
\left\langle x,y \right\rangle_{\Hill} = \sum_{i = -\infty}^{\infty} \left\langle x_i, y_i \right\rangle_{\Hil}.
\end{equation}
We will suppress the subscript $\Hill$ of the inner product whenever it is clear if we are concerned with elements of $\Hil$ or $\Hill$. Clearly, $\Hill$ together with its inner product is immediately seen to be equivalent to the infinite direct sum $\oplus_{i=-\infty}^{\infty} \Hil$. It is well-known that the result of this construction is also a Hilbert space.

We define the closed subspaces $\Hilp, \Hilm \subseteq \Hill$ as the collection of all $x \in \Hill$ fulfilling $x_i = 0$ for $i < 0$ (resp. $i > 0$).

$\Hill$ becomes an $\A^{\prime} \laur$-module in a natural way: for $\varphi \in \A^{\prime} \laur, x \in \Hill$, we simply set
\[
\varphi x = \sum_{i = -\infty}^{\infty} t^i \left( \sum_{k = -\infty}^{\infty} \varphi_k x_{i-k} \right)
\]

When defining a module action on a Hilbert space, it should be proved that the multiplication maps are continuous, i.e.

\begin{prop}
For each $\varphi \in \A\laur$ or $\varphi \in \A^{\prime}\laur$, the map $x \mapsto \varphi x$ is continuous on $\Hil \powel$.
\end{prop}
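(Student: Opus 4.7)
My plan is to reduce the statement to two very simple building blocks and combine them.

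First I would observe that by hypothesis $\varphi = \sum_{i=-N}^{N} t^i \varphi_i$ has only finitely many nonzero coefficients, so the action decomposes as a finite sum
\[
\varphi x = \sum_{i=-N}^{N} t^i (\varphi_i \cdot x),
\]
where, with a slight abuse of notation, $\varphi_i \cdot x$ denotes the \emph{diagonal} action of the single operator $\varphi_i \in \A$ (or $\A^{\prime}$) on $\Hill$, and $t$ denotes the shift $x \mapsto tx$. Since the sum of finitely many bounded operators on a Hilbert space is bounded, it suffices to prove continuity of these two types of maps separately.

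Second, the shift is an isometry: writing $x = \sum_i t^i x_i$, we have $tx = \sum_i t^i x_{i-1}$, and comparing with the inner product formula (\ref{eq:hill_inner_product}) one sees $\Vert tx \Vert_{\Hill}^2 = \sum_i \Vert x_{i-1}\Vert_{\Hil}^2 = \Vert x \Vert_{\Hill}^2$. This handles the factor $t^i$.

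Third, the diagonal action of a single $\psi \in \A$ (or $\psi \in \A^{\prime}$, the argument is identical since both live in $\mathcal{B}(\Hil)$) satisfies, for $x = \sum_i t^i x_i \in \Hill$,
\[
\Vert \psi \cdot x \Vert_{\Hill}^2 = \sum_{i=-\infty}^{\infty} \Vert \psi x_i \Vert_{\Hil}^2 \leq \Vert \psi \Vert_{\mathrm{op}}^2 \sum_{i=-\infty}^{\infty} \Vert x_i \Vert_{\Hil}^2 = \Vert \psi \Vert_{\mathrm{op}}^2 \Vert x \Vert_{\Hill}^2,
\]
so the diagonal action is bounded with operator norm at most $\Vert \psi \Vert_{\mathrm{op}}$. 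Combining these bounds via the triangle inequality gives the explicit estimate $\Vert \varphi x \Vert_{\Hill} \leq \bigl(\sum_{i=-N}^{N} \Vert \varphi_i \Vert_{\mathrm{op}}\bigr) \Vert x \Vert_{\Hill}$, which yields continuity.

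There is no real obstacle here—the statement is essentially the observation that $\Hill$ is the Hilbert space direct sum $\bigoplus_{i\in\Z}\Hil$ and that the $\A\laur$-action is built from bounded operators on $\Hil$ together with the shift. The only point demanding a modicum of care is verifying that the two-parameter double sum defining $\varphi x$ is genuinely computed termwise, which is immediate because $\varphi$ has finite support.
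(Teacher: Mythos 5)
Your proof is correct and follows essentially the same route as the paper: both decompose the action of $\varphi$ into shifts (which are isometries) and diagonal actions of single operators $\varphi_i \in \mathcal{B}(\Hil)$ (bounded with norm at most $\Vert \varphi_i \Vert$), then combine the finitely many pieces. Your version is slightly more explicit in recording the resulting bound $\Vert \varphi x \Vert \leq \bigl(\sum_i \Vert \varphi_i \Vert\bigr)\Vert x \Vert$, but the substance is identical.
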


\begin{proof}
It clearly suffices to assume $\varphi \in \A\laur$.

$x \mapsto tx$ and $x \mapsto t^{-1}x$ are just shift operators on $\Hill$ which are clearly isometries and therefore continuous.

For $\varphi_0 \in \A$, we have
\[
\Vert t^0 \varphi x \Vert = \Vert \sum_{i=-\infty}^{\infty} t^i \varphi_0 x_i \Vert = \left( \sum_{i=-\infty}^{\infty} \Vert \varphi_0 x_i \Vert^2 \right)^{\frac{1}{2}} \leq \Vert \varphi_0 \Vert \left( \sum_{i=-\infty}^{\infty} \Vert x_i \Vert^2 \right)^{\frac{1}{2}} = \Vert \varphi_0 \Vert \cdot \Vert x \Vert,
\]
so $t^0 \varphi_0$ acts continuously, too. The action of an arbitrary element of $\A \laur$ is a finite combination of these operations and therefore continuous.
\end{proof}

The inner product is connected with the $\ast$-operation on $\A\laur$ by the equation
\begin{equation} \label{eq:adjointness_of_star}
\left\langle x, \varphi y \right\rangle = \left\langle \varphi^{\ast} x, y \right\rangle
\end{equation}
which can be reduced to the case $\varphi = t^k \varphi_k$, using linearity. This case follows from the calculation
\begin{align*}
\left\langle x, t^k \varphi_k y \right\rangle & = \left\langle \sum_{i = -\infty}^{\infty} t^i x_i, \sum_{i = -\infty}^{\infty} t^i \varphi_k y_{i-k} \right\rangle \\
& = \sum_{i = -\infty}^{\infty} \left\langle x_i, \varphi_k y_{i-k} \right\rangle \\
& = \sum_{j = -\infty}^{\infty} \left\langle \varphi_k^{\ast} x_{j+k}, y_j \right\rangle \\
& = \left\langle \sum_{j = -\infty}^{\infty} t^j \varphi_k^{\ast} x_{j+k}, \sum_{j = -\infty}^{\infty} t^j y_j \right\rangle \\
& = \left\langle t^{-k}\varphi_k^{\ast} x, y \right\rangle.
\end{align*}

It turns out, that $\Hilp$ (resp. $\Hilm$) is invariant under the action of $\A^{\prime} \polyp$ (resp. $\A^{\prime} \polym$).

How does $\PPU(\A)$ come into play? This question is partly answered by the following proposition:

\begin{prop} \label{prop:ppu_acts_unitarily}
Each $\varphi \in \PPU(\A)$ acts as a unitary operator - with respect to the inner product given by (\ref{eq:hill_inner_product}) - on $\Hill$.
\end{prop}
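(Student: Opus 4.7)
The plan is to reduce unitarity to two routine facts: that $\varphi^{*}$ (as an element of $\A\laur$) acts on $\Hill$ as the Hilbert-space adjoint of $\varphi$, and that the action of $\A\laur$ on $\Hill$ is a ring homomorphism. Together with the defining relations $\varphi^{*}\varphi = \varphi\varphi^{*} = 1$ of the paraunitary group, these give both the isometry and the surjectivity of $\varphi$.

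First I would verify that the $\A\laur$-module structure on $\Hill$ is associative in the sense that $(\varphi\psi)x = \varphi(\psi x)$ for $\varphi,\psi \in \A\laur$ and $x \in \Hill$, and that $1 \cdot x = x$. By $\K$-bilinearity it suffices to check this for monomials $\varphi = t^{j}\varphi_{j}$, $\psi = t^{k}\psi_{k}$, where it reduces to the associativity already built into the definitions (and where continuity, already established in the preceding proposition, ensures there are no convergence issues since the $\A\laur$-elements have only finitely many nonzero coefficients). Note that the hypothesis $\varphi \in \PPU(\A)$ is not really used beyond the inclusion $\PPU(\A) \subseteq \PU(\A)$; the argument will give unitarity for every $\varphi \in \PU(\A)$.

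Next I invoke the adjointness identity (\ref{eq:adjointness_of_star}), $\langle x, \varphi y\rangle = \langle \varphi^{*}x, y\rangle$, which says precisely that the continuous operator induced by $\varphi^{*} \in \A\laur$ is the Hilbert-space adjoint of the continuous operator induced by $\varphi$. Using this together with the previous step and the paraunitarity relations in $\A\laur$, I compute for $x,y \in \Hill$
\[
\langle \varphi x, \varphi y\rangle \;=\; \langle x, \varphi^{*}\varphi y\rangle \;=\; \langle x, 1 \cdot y\rangle \;=\; \langle x, y\rangle,
\]
so $\varphi$ is an isometry on $\Hill$. Similarly, given any $y \in \Hill$, the element $x := \varphi^{*}y \in \Hill$ satisfies $\varphi x = \varphi\varphi^{*}y = 1 \cdot y = y$, which gives surjectivity. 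Hence $\varphi$ is a unitary operator on $\Hill$.

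The only genuinely delicate point is the ring-homomorphism property of the $\A\laur$-action on $\Hill$, since $\Hill$ is a completion and not merely an algebraic module; but because every $\varphi \in \A\laur$ has compact support in its $t$-expansion, each individual action $x \mapsto \varphi x$ is a finite $\K$-linear combination of the isometric shift operators and the continuous coefficient operators from $\A$, so the identity $(\varphi\psi)x = \varphi(\psi x)$ may be verified coefficientwise on the finite sums defining the left- and right-hand sides and then extended by the already-established boundedness. After this bookkeeping the proof is essentially one line of Hilbert-space algebra.
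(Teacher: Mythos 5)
Your proposal is correct and follows essentially the same route as the paper: the key step in both is the adjointness identity (\ref{eq:adjointness_of_star}) combined with the paraunitarity relation $\varphi^{\ast}\varphi = 1$ to get $\left\langle \varphi x, \varphi y \right\rangle = \left\langle x, y \right\rangle$. You are in fact slightly more careful than the paper's one-line proof, which only establishes the isometry property and leaves the surjectivity of $\varphi$ (immediate from $\varphi^{\ast}$ acting as a two-sided inverse) and the associativity of the module action implicit.
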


\begin{proof}
Let $\varphi \in \PPU(\A)$, $x,y \in \Hill$, then $\left\langle \varphi x , \varphi y \right\rangle \overset{\text{(\ref{eq:adjointness_of_star})}}{=} \left\langle \varphi^{\ast} \varphi x , y \right\rangle = \left\langle x, y \right\rangle$.
\end{proof}

\begin{rema}
A reader who is interested in the analytical aspects may feel uneasy with the rather \glqq algebraic\grqq\ definition of $\PPU(\A)$ in terms of finite Laurent series. (Pure) paraunitary groups with a rather \glqq analytic\grqq\ flavour will be discussed in \autoref{sec:remarks}.
\end{rema}

We can now introduce a family of lattices which will play a crucial role in this section:

\begin{defi}
For integers $m \leq n$ we define $\sub(\Hil)_m^n$ as the lattice of all $\A^{\prime}\polym$-invariant closed subspaces $M \subseteq \Hill$ such that $t^m \Hilm \subseteq M \subseteq t^n \Hilm$.

Furthermore, for $m \in \Z$ we define
\begin{align*}
\sub(\Hil)_m^{\infty} & = \bigcup_{n = m}^{\infty}  \sub(\Hil)_m^n, \\
\sub(\Hil)_{-\infty}^m & = \bigcup_{n = -\infty}^m  \sub(\Hil)_m^n, \\
\sub(\Hil)_{-\infty}^{\infty} & = \bigcup_{n = -\infty}^{\infty}  \sub(\Hil)_{-n}^n
\end{align*}
where we see each $\sub(\Hil)_m^n$ as a sublattice in the lattice of all closed subspaces of $\Hill$.
\end{defi}

From now on, when speaking of $\PPU(\A)$ or $\PPU^+(\A)$ as ordered sets, we will always mean the right-invariant orders defined by $\varphi \leq \varphi^{\prime}$ whenever there is a $\psi \in \PPU^+(\A)$ such that $\varphi^{\prime} = \psi \varphi$.

Another protagonist in this section is the mapping $\Omega$ which is defined as
\begin{align*}
\Omega: \PPU(\A) & \to \sub(\Hil)_{-\infty}^{\infty} \\
\varphi & \mapsto \varphi \Hilm
\end{align*}
which has a sibling $\Omega^+$ which is similarly defined as
\begin{align*}
\Omega^+: \PPU^+(\A) & \to \sub(\Hil)_0^{\infty} \\
\varphi & \mapsto \varphi \Hilm.
\end{align*}

First of all, we have to show that $\Omega$ and $\Omega^+$ are indeed mappings, that is:

\begin{prop} \label{prop:omegas_are_welldefined_and_monotone}
$\Omega$ and $\Omega^+$ are well defined and monotone.
\end{prop}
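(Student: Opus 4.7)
I would proceed in two stages: well-definedness first, monotonicity second.

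For well-definedness, fix $\varphi = \sum_{i=m}^n t^i \varphi_i \in \PPU(\A)$ with $\varphi_m, \varphi_n \neq 0$, and verify in turn that $\varphi \Hilm$ is closed, $\A^{\prime}\polym$-invariant, and sandwiched as $t^m \Hilm \subseteq \varphi \Hilm \subseteq t^n \Hilm$. Closedness is immediate from $\varphi$ acting as a unitary on $\Hill$ by \autoref{prop:ppu_acts_unitarily}, hence as a homeomorphism. $\A^{\prime}\polym$-invariance follows from $\A$ and $\A^{\prime}$ being mutual commutants: this commutation extends coefficient-wise to $\A\laur$ and $\A^{\prime}\laur$, so for $\eta \in \A^{\prime}\polym$ and $x \in \Hilm$ we have $\eta \varphi x = \varphi \eta x \in \varphi \Hilm$, since $\eta x \in \Hilm$. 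The sandwich is a direct degree count on the expansion of $\varphi x$: only terms $t^{i+j}\varphi_i x_j$ with $m \leq i \leq n$ and $j \leq 0$ can appear, giving $\varphi \Hilm \subseteq t^n\Hilm$; applying the same argument to $\varphi^{-1} = \varphi^*$ (whose degrees lie in $[-n,-m]$) yields $\varphi^*(t^m\Hilm) \subseteq \Hilm$, equivalently $t^m\Hilm \subseteq \varphi\Hilm$. Hence $\varphi\Hilm \in \sub(\Hil)_m^n \subseteq \sub(\Hil)_{-\infty}^{\infty}$. For $\Omega^+$, the extra hypothesis $\varphi \in \A\polyp$ forces $m \geq 0$, so $\Hilm \subseteq t^m\Hilm \subseteq \varphi\Hilm$, placing $\varphi\Hilm$ in $\sub(\Hil)_0^n \subseteq \sub(\Hil)_0^{\infty}$.

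For monotonicity, suppose $\varphi \leq \varphi'$, so $\varphi' = \psi\varphi$ for some $\psi \in \PPU^+(\A)$; the goal is $\varphi\Hilm \subseteq \psi\varphi\Hilm = \varphi'\Hilm$. The anchoring observation is $\Hilm \subseteq \psi\Hilm$: indeed, $\psi^{-1} = \psi^* \in \A\polym$ because $\psi \in \A\polyp$ is paraunitary, so $\psi^*\Hilm \subseteq \Hilm$, which upon applying $\psi$ yields the claim. From here I would propagate this inclusion across $\varphi$ using the $\A^{\prime}\polym$-invariance of $\varphi\Hilm$ together with the positivity of $\psi$ in the $t$-grading.

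The hard part is this last propagation step. A direct application of the unitary $\varphi$ on the left of $\Hilm \subseteq \psi\Hilm$ gives only $\varphi\Hilm \subseteq \varphi\psi\Hilm$, which in general differs from the required $\varphi\Hilm \subseteq \psi\varphi\Hilm$ because $\varphi$ and $\psi$ do not commute. I expect the resolution to lean on the $\A^{\prime}\polym$-stability of $\varphi\Hilm$ (so that $\A^{\prime}\polym$-translates stay inside) combined with the constraint $\psi^{-1} = \psi^* \in \A\polym$, perhaps by analysing the action of $\psi^*$ as a ``degree-lowering'' operator on the $\A^{\prime}\polym$-module $\varphi\Hilm$ and concluding that $\psi^*\varphi\Hilm \subseteq \varphi\Hilm$, which is equivalent to the desired inclusion.
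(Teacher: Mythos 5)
Your well-definedness argument is correct and essentially parallel to the paper's: the paper obtains $\varphi \Hilm \subseteq t^n \Hilm$ by noting $t^n\varphi^{\ast} \in \PPU^+(\A)$, hence $\varphi \le t^n$, while you do a direct degree count; both are fine.

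The monotonicity half has a genuine gap, and the ``propagation step'' you flag as hard is in fact impossible rather than merely difficult. By \autoref{lem:sending_hilm_to_hilm} applied to $\varphi^{-1}\psi^{-1}\varphi$, the inclusion $\varphi\Hilm \subseteq \psi\varphi\Hilm$ is \emph{equivalent} to $\varphi^{-1}\psi\varphi \in \PPU^+(\A)$, and conjugates of positive elements need not be positive. Concretely, take $\A = \mathcal{B}(\C^2)$, $\psi = p_M = t\pi_M + \pi_{M^{\ast}}$ with $M = \C e_1$, and $\varphi = p_N$ with $N = \C(e_1+e_2)$; then $p_N^{\ast}p_M p_N$ has the nonzero $t^{-1}$-coefficient $\pi_N\pi_{M^{\ast}}\pi_{N^{\ast}}$ and so does not lie in $\A\polyp$, whence $p_N\Hilm \not\subseteq p_M p_N \Hilm$. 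Your fallback idea of showing $\psi^{\ast}\varphi\Hilm \subseteq \varphi\Hilm$ is the same false statement in disguise. The resolution is that the order actually used throughout the paper is the one whose positive cone multiplies on the \emph{right}: $\varphi \le \varphi'$ iff $\varphi' = \varphi\psi$ for some $\psi \in \PPU^+(\A)$ --- this is the convention operative in the proofs of \autoref{prop:omega_is_surjective} and \autoref{lem:omega_is_an_embedding} (the sentence preceding the definition of $\Omega$, which writes $\varphi' = \psi\varphi$, is a slip, and only the $\varphi' = \varphi\psi$ reading makes $\Omega$ monotone). Under that convention, monotonicity follows in one line from your anchoring observation: $\varphi'\Hilm = \varphi(\psi\Hilm) \supseteq \varphi\Hilm$, and no commutation of $\varphi$ with $\psi$ is needed.
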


\begin{proof}
We start by showing that $\Hilm \subseteq \varphi \Hilm$ for $\varphi \in \PPU^+(\A)$. This is equivalent to $\varphi^{-1} \Hilm \subseteq \Hilm$ which follows from $\varphi^{-1} = \varphi^{\ast} \in \A \polym$.

Take $\varphi, \varphi^{\prime} \in \PPU^+(\A)$ with $\varphi \leq \varphi^{\prime}$. There is a $\psi \in \PPU^+(\A)$ with $\varphi^{\prime} = \varphi \psi$. From what we have shown before it follows that
\[
\varphi^{\prime} \Hilm = \varphi \psi \Hilm \supseteq \varphi \Hilm.
\]

We can now show that $\varphi \in \PPU^+(\A)$ implies $\varphi \Hilm \subseteq t^n \Hilm$ for some integer $n$: take a non-negative $n$ with $t^n \varphi^{\ast} \in \PPU^+(\A)$. Then $\varphi t^n \varphi^{\ast} = t^n$, therefore $\varphi \leq t^n \Rightarrow \varphi \Hilm \subseteq t^n \Hilm$.

So, $\Omega^+$ is well-defined and monotone.

For general $\varphi \in \PPU(\A)$ chose an integer $m$ such that $t^m \varphi \in \PPU^+(\A)$. From the arguments above follows $\Hilm \subseteq t^m \varphi \Hilm \subseteq t^n \Hilm$ for some non-negative integer $n$. Therefore, $t^{-m} \Hilm \subseteq \varphi \Hilm \subseteq t^{n-m} \Hilm$, proving that $\Omega$ is well-defined.

Assuming $\varphi, \varphi^{\prime} \in \PPU(\A)$ (but still $\psi \in \PPU^+(\A)$) in the proof of monotonicity for $\Omega^+$, we get a proof for the monotonicity of $\Omega$.
\end{proof}

The aim of the following two subsections will be the proof of

\begin{thm} \label{thm:omega_is_bijective}
$\Omega$ and $\Omega^+$ are bijective.
\end{thm}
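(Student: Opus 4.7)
The plan is to treat injectivity directly, to reduce surjectivity of $\Omega$ to that of $\Omega^+$ by a shift, and to prove surjectivity of $\Omega^+$ by induction on the degree bound $n$ in $\sub(\Hil)_0^n$.

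For injectivity, it suffices to show that any $\rho \in \PPU(\A)$ with $\rho\Hilm = \Hilm$ equals $1$; this is applied to $\rho := (\varphi^{\prime})^{-1}\varphi$ when $\varphi$ and $\varphi^{\prime}$ have the same image under $\Omega$ (or $\Omega^+$). Since $\rho$ acts unitarily on $\Hill$ by \autoref{prop:ppu_acts_unitarily}, the equality $\rho\Hilm = \Hilm$ gives both $\rho\Hilm \subseteq \Hilm$ and $\rho^{-1}\Hilm \subseteq \Hilm$. Testing the first inclusion on $t^0 x$ for $x \in \Hil$, the $t^k$-coefficient $\rho_k x$ of $\rho\cdot t^0 x$ must vanish for every $k > 0$, forcing $\rho \in \A\polym$; the dual argument for $\rho^{-1} = \rho^{\ast}$ gives $\rho \in \A\polyp$. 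Hence $\rho = t^0\rho_0$ is constant with $\rho_0 \in U(\A)$, and $\varepsilon_1(\rho) = \rho_0 = 1$ yields $\rho = 1$.

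For surjectivity, since $t^m \in \PPU(\A)$ for every $m \in \Z$ and multiplication by $t^{-m}$ identifies $\sub(\Hil)_m^n$ with $\sub(\Hil)_0^{n-m}$, it is enough to prove surjectivity of $\Omega^+$ onto each $\sub(\Hil)_0^n$ by induction on $n$. The base case $n = 0$ is immediate. For the step with $n \geq 1$ and $M \in \sub(\Hil)_0^n$, I define $V$ to be the closure in $\Hil$ of the set $\{m_n : m \in M\}$ of top-coefficients; this is a closed $\A^{\prime}$-invariant subspace of $\Hil$, so by \autoref{prop:invariant_subspaces_are_annihilated} its projection $p$ lies in $\A$. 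Setting $\varphi_0 := (1-p) + tp \in \A\polyp$, the identities $p = p^{\ast}$ and $p^2 = p$ yield $\varphi_0\varphi_0^{\ast} = 1$ and $\varepsilon_1(\varphi_0) = 1$, so $\varphi_0 \in \PPU^+(\A)$. I aim to show $\varphi_0^{-1}M \in \sub(\Hil)_0^{n-1}$, after which the inductive hypothesis furnishes $\varphi_1 \in \PPU^+(\A)$ with $\varphi_1\Hilm = \varphi_0^{-1}M$ and $\varphi_0\varphi_1 \in \PPU^+(\A)$ realises $M$. Closedness and $\A^{\prime}\polym$-invariance of $\varphi_0^{-1}M$ follow at once from $\varphi_0$ being unitary and commuting with $\A^{\prime}\laur$. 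The upper bound $\varphi_0^{-1}M \subseteq t^{n-1}\Hilm$ holds because the $t^n$-coefficient of $\varphi_0^{-1}m = ((1-p)+t^{-1}p)m$ equals $(1-p)m_n$, which vanishes since $m_n \in V = p\Hil$.

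The main obstacle will be the remaining inclusion $\Hilm \subseteq \varphi_0^{-1}M$, equivalently $\varphi_0\Hilm \subseteq M$. A direct expansion gives $\varphi_0\Hilm = \Hilm \oplus tV$, so the task reduces to $tV \subseteq M$. Given $a = m_n$ with $m \in M$, the $\A^{\prime}\polym$-invariance applied $n-1$ times yields $t^{1-n}m \in M$, whose $t^1$-coefficient is $a$ and whose remaining terms lie in $\Hilm \subseteq M$; subtracting the latter isolates $ta \in M$, and closedness of $M$ extends this conclusion to all $a \in V$. This step critically uses the interplay between the shift-invariance $t^{-1}M \subseteq M$, the hypothesis $\Hilm \subseteq M$, and the definition of $V$ as the closure of the set of leading coefficients.
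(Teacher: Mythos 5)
Your proof is correct, and its overall architecture matches the paper's: injectivity via a leading-coefficient argument showing that $\rho\Hilm=\Hilm$ forces $\rho$ to be a constant, hence $1$; surjectivity of $\Omega^+$ by induction on $n$, peeling off one degree-one paraunitary factor per step; and the case of $\Omega$ reduced to $\Omega^+$ by multiplying with powers of $t$. The one genuine point of divergence is the choice of the peeling subspace in the induction step. The paper obtains its subspace $M_1$ from the orthomodular decomposition $M\cap t\Hilm = \Hilm\oplus tM_1$, so that $tM_1 = t\Hil\cap M$; the inclusion $tM_1\subseteq M$ is then automatic, and the work goes into verifying the degree drop via the inclusion $t^{-N}M+\Hilm\subseteq M\cap t\Hilm$. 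You instead take $V$ to be the closed span of the top coefficients $\{m_n : m\in M\}$, so the degree drop $(1-p)m_n=0$ is automatic by construction, and the work goes into verifying $tV\subseteq M$ via the shift $t^{1-n}m\in M$ and closedness of $M$. A priori one only gets $V\subseteq M_1$ (equality is not needed and not obvious before bijectivity is established), but either choice satisfies the two properties the induction actually requires, namely $\Hilm\oplus tW\subseteq M$ and $p_W^{-1}M\subseteq t^{n-1}\Hilm$. Your variant has the small advantage of not invoking the orthomodular law at this point; the paper's variant produces the canonical subspace $t\Hil\cap M$, which it reuses later (in \autoref{lem:divisors_of_t}). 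Everything else --- the $\A^{\prime}$-invariance of $V$ giving $p\in\A$ via \autoref{prop:invariant_subspaces_are_annihilated}, and the identity $\varphi_0\Hilm=\Hilm\oplus tV$ --- checks out.
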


\begin{proof}
In the following subsections we will show that $\Omega^+$ is both injective (\autoref{prop:omega_is_injective}) and surjective (\autoref{prop:omega_is_surjective}). This proves the second part.

The bijectivity of $\Omega$ is then an easy corollary:

Let $\varphi, \varphi^{\prime} \in \PPU(\A)$ fulfil $\varphi \Hilm = \varphi^{\prime} \Hilm$. Chose an integer $n$ with $t^n \varphi, t^n \varphi^{\prime} \in \PPU^+(\A)$. Clearly, we also have $t^n \varphi \Hilm = t^n \varphi^{\prime} \Hilm$. From the injectivity of $\Omega^+$ it follows that $t^n \varphi = t^n \varphi^{\prime}$ and finally $\varphi = \varphi^{\prime}$. This proves that $\Omega$ is injective.

Take $M \in \sub(\Hil)_{-\infty}^{\infty}$. Then there is an integer $n$ with $t^n M \in  \sub(\Hil)_{0}^{\infty}$. $\Omega^+$ is surjective, therefore there is some $\varphi \in \PPU^+(\A)$ with $\varphi \Hilm = t^n M$, showing that $t^{-n}\varphi \Hilm = M$.
\end{proof}

\begin{rema}
$\Omega$ and $\Omega^+$ may be bijective and order-preserving but it does not already follow that they are order-isomorphisms - this will be shown later (\autoref{lem:omega_is_an_embedding}).
\end{rema}

\subsection{\texorpdfstring{$\Omega^+$}{Omega+} is injective}

We begin with a simple lemma:

\begin{lem} \label{lem:sending_hilm_to_hilm}
Let $\psi \in \PPU(\A)$ fulfil $\psi \Hilm \subseteq \Hilm$. Then $\psi \in \PPU^-(\A)$:
\end{lem}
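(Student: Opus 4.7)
The goal is to show that if $\psi = \sum_i t^i \psi_i$ lies in $\PPU(\A)$ and satisfies $\psi \Hilm \subseteq \Hilm$, then all coefficients $\psi_i$ with $i>0$ must vanish, which by definition places $\psi$ in $\PPU^-(\A) = \PPU(\A)\cap \A\polym$.

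The plan is to test the hypothesis on the simplest possible vectors in $\Hilm$, namely the constant ones. For any $x\in\Hil$, the vector $t^0 x$ (the element of $\Hill$ all of whose components vanish except in degree $0$) plainly lies in $\Hilm$. Applying $\psi$ via the $\A\laur$-module structure and the formula for the action given in the preceding subsection, I get
\[
\psi(t^0 x) = \sum_{i=-\infty}^{\infty} t^i \,\psi_i x.
\]
For this vector to again lie in $\Hilm$, every component of positive degree must be zero, i.e.\ $\psi_i x = 0$ for all $i>0$.

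Since $x\in\Hil$ is arbitrary, this forces $\psi_i = 0$ for every $i>0$, and hence $\psi\in\A\polym$. Combined with the standing assumption $\psi\in\PPU(\A)$, this gives $\psi\in \PPU(\A)\cap \A\polym = \PPU^-(\A)$, as required.

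I do not anticipate a substantive obstacle: the only thing to be careful about is the bookkeeping between ``negative part'' meaning coefficients indexed by $i\le 0$ versus $i<0$, and to remember that $t^0 x$ is genuinely allowed in $\Hilm$ under the definition (the constraint is $x_i=0$ for $i>0$, not $i\ge 0$). Once that is kept straight, reading off $\psi_i x = 0$ from the pointwise equality of series in $\Hill$ is immediate from the orthogonal direct sum decomposition $\Hill \cong \bigoplus_i \Hil$.
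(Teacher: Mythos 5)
Your proposal is correct and is essentially the paper's own argument: both test $\psi$ on the constant vectors $t^0x \in \Hilm$ and read off that the positive-degree coefficients must annihilate every $x\in\Hil$, hence vanish. The only cosmetic difference is that the paper phrases this as a contradiction via the top nonzero coefficient $\psi_k$, while you argue directly; the content is identical.
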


\begin{proof}
Let $\psi \Hilm \subseteq \Hilm$ and assume $\psi \notin \PPU^-(\A)$. Then there is a $k>0$ such that $\psi = \sum_{i=-\infty}^k t^i \psi_i$ with $\psi_k \neq 0$.

Take an $x \in \Hil$ with $\psi_k x \neq 0$. Then $t^0 x \in \Hilm$ but $\psi (t^0 x) = \sum_{i=-\infty}^k t^i \psi_i x$ which has the nonzero $k$'th coefficient $\psi_k x$. Therefore, $\psi(t^0x) \notin \Hilm$ which is a contradiction.
\end{proof}

It is now easy to show:

\begin{prop} \label{prop:omega_is_injective}
$\Omega^+$ is injective.
\end{prop}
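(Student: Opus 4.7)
The plan is to reduce injectivity to the already-proved fact that $\PPU^+(\A) \cap \PPU^-(\A) = \{1\}$ (established inside the proof of \autoref{prop:ppu_plusminus_are_cones}), using \autoref{lem:sending_hilm_to_hilm} as the bridge.

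Concretely, suppose $\varphi, \varphi' \in \PPU^+(\A)$ satisfy $\varphi \Hilm = \varphi' \Hilm$. I would set $\psi := \varphi^{-1} \varphi' \in \PPU(\A)$. Then $\psi \Hilm = \Hilm$, so in particular $\psi \Hilm \subseteq \Hilm$; \autoref{lem:sending_hilm_to_hilm} then gives $\psi \in \PPU^-(\A)$. Symmetrically, $\psi^{-1}\Hilm = \Hilm \subseteq \Hilm$, so $\psi^{-1} \in \PPU^-(\A)$. Since $\psi \in \PPU(\A)$ we have $\psi^{-1} = \psi^{\ast}$, and the involution $\ast$ swaps $\A\polyp$ and $\A\polym$; thus $\psi^{\ast} \in \PPU^-(\A)$ is equivalent to $\psi \in \PPU^+(\A)$. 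Combining, $\psi \in \PPU^+(\A) \cap \PPU^-(\A) = \{1\}$, and therefore $\varphi = \varphi'$.

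No real obstacle is expected here: \autoref{lem:sending_hilm_to_hilm} has already done the analytic work of extracting positivity from invariance of $\Hilm$, and the rest is a formal manipulation of cones and involutions. The only thing to keep in mind is that we must apply the lemma to both $\psi$ and $\psi^{-1}$ in order to pin down $\psi$ to the intersection of the two cones, rather than just one.
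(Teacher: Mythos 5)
Your proposal is correct and follows essentially the same route as the paper: both apply \autoref{lem:sending_hilm_to_hilm} to $\varphi^{-1}\varphi'$ and to its inverse, then invoke the purity of the cone from \autoref{prop:ppu_plusminus_are_cones} to conclude $\varphi^{-1}\varphi' = 1$. Your extra step of translating $\psi^{-1} \in \PPU^-(\A)$ into $\psi \in \PPU^+(\A)$ via the involution is just an explicit unwinding of what purity of the cone means, so the two arguments coincide.
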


\begin{proof}
Take $\varphi, \varphi^{\prime} \in \PPU^+(\A)$ with $\varphi \Hilm = \varphi^{\prime} \Hilm$. From this it follows that $\varphi^{-1} \varphi^{\prime} \Hilm = \Hilm$ and $\varphi^{\prime -1} \varphi \Hilm = \Hilm$.

\autoref{lem:sending_hilm_to_hilm} implies that $\varphi^{-1} \varphi^{\prime} \in \PPU^-(\A)$ and $(\varphi^{-1} \varphi^{\prime})^{-1} = \varphi^{\prime-1} \varphi \in \PPU^-(\A)$. But $\PPU^-(\A)$ is a negative cone (\autoref{prop:ppu_plusminus_are_cones}), therefore $\varphi^{-1} \varphi^{\prime} = 1$ which implies $\varphi = \varphi^{\prime}$.
\end{proof}

\subsection{\texorpdfstring{$\Omega^+$}{Omega+} is surjective}

We can define, for any $M \in X(\A^{\prime})$, an element $p_M := t\pi_M + \pi_{M^{\ast}} \in \A\polyp$ where the latter membership is guaranteed by \autoref{prop:invariant_subspaces_are_annihilated}.

We can even say more about these elements:

\begin{prop}
For each $M \in X(\A^{\prime})$, we have $p_M \in \PPU^+(\A)$.
\end{prop}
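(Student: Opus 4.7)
The plan is to verify the three defining conditions of $\PPU^+(\A)$ directly from the definition of $p_M$. Namely, I need to check that (i) $p_M \in \A\polyp$, (ii) $p_M$ is paraunitary, i.e. $p_M^\ast p_M = p_M p_M^\ast = 1$, and (iii) $\varepsilon_1(p_M) = 1$.

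Condition (i) is already established in the paragraph preceding the statement: by \autoref{prop:invariant_subspaces_are_annihilated}, both $\pi_M$ and $\pi_{M^\ast}$ lie in $\A$, so $p_M = t\pi_M + \pi_{M^\ast} \in \A\polyp$. Condition (iii) is an immediate calculation: $\varepsilon_1(p_M) = \pi_M + \pi_{M^\ast} = 1_\Hil$, since $\pi_M$ and $\pi_{M^\ast}$ are the orthogonal projections onto complementary closed subspaces of $\Hil$.

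The main step is condition (ii). First I would compute $p_M^\ast$. Since the $\ast$-operation on $\A\laur$ sends $t^i$ to $t^{-i}$ and acts as the involution on coefficients, and since orthogonal projections are self-adjoint, we get $p_M^\ast = t^{-1}\pi_M + \pi_{M^\ast}$. Then I would expand:
\[
p_M p_M^\ast = (t\pi_M + \pi_{M^\ast})(t^{-1}\pi_M + \pi_{M^\ast}) = \pi_M^2 + t\pi_M \pi_{M^\ast} + t^{-1}\pi_{M^\ast}\pi_M + \pi_{M^\ast}^2.
\]
Using $\pi_M^2 = \pi_M$, $\pi_{M^\ast}^2 = \pi_{M^\ast}$, and $\pi_M \pi_{M^\ast} = \pi_{M^\ast}\pi_M = 0$ (since $M$ and $M^\ast$ are orthogonal complements in $\Hil$), the cross terms vanish and we obtain $\pi_M + \pi_{M^\ast} = 1$. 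The computation of $p_M^\ast p_M$ is entirely symmetric and again yields $1$.

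There is no real obstacle here; the statement is essentially a bookkeeping check that the projection data encoding the orthogonal decomposition $\Hil = M \oplus M^\ast$ packages into a degree-one paraunitary polynomial. The only point worth emphasising is that membership of $\pi_M, \pi_{M^\ast}$ in $\A$ (rather than just in $\mathcal{B}(\Hil)$) rests on \autoref{prop:invariant_subspaces_are_annihilated}, which is why the ambient invariance is formulated in terms of $\A^{\prime}$ via the lattice $X(\A^{\prime})$.
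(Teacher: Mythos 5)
Your proof is correct and follows essentially the same route as the paper: expand the product using self-adjointness and orthogonality of $\pi_M$ and $\pi_{M^{\ast}}$, then check $\varepsilon_1(p_M)=1$, with membership in $\A\polyp$ supplied by \autoref{prop:invariant_subspaces_are_annihilated}. The only cosmetic difference is that you expand $p_M p_M^{\ast}$ where the paper expands $p_M^{\ast}p_M$; both are symmetric.
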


\begin{proof}
Using the self-adjointness of projections, we calculate:
\begin{align*}
p_M^{\ast} p_M  & = \left( t^{-1}\pi_M + \pi_{M^{\ast}} \right) \left( t\pi_M + \pi_{M^{\ast}} \right) \\
& = t^{-1} \underbrace{\pi_M \pi_{M^{\ast}}}_{=0} +\pi_{M}^2 + \pi_{M^{\ast}}^2 +  t \underbrace{ \pi_{M^{\ast}}\pi_M}_{=0} \\
& = \pi_M + \pi_{M^{\ast}} = 1.
\end{align*}
Similarly, one calculates $p_M p_M^{\ast} = 1$, thus showing that $p_M$ is indeed paraunitary.

Finally, $\varepsilon_1(p_M) = \pi_M + \pi_{M^{\ast}} = 1$.
\end{proof}

For $M \in X(\A^{\prime})$, we define for an integer $i$ the closed subspace
\[
t^i M := \left\{ t^i x: x \in M \right\} \subseteq \Hill.
\]

It turns out that we can already specify what $\Omega(p_M)$ is:

\begin{lem} \label{lem:what_is_omega_pm}
For $M \in X(\A^{\prime})$, $\Omega(p_M) = \Hilm \oplus tM$.
\end{lem}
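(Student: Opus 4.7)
The plan is to compute the image $p_M \Hilm$ directly, exploiting that $p_M = t\pi_M + \pi_{M^{\ast}}$ consists of only two terms. First I would record that, by \autoref{prop:ppu_acts_unitarily}, $p_M$ acts as a unitary operator on $\Hill$ whose inverse is simply $p_M^{\ast} = t^{-1}\pi_M + \pi_{M^{\ast}}$; in particular, $p_M \Hilm$ is automatically closed, which removes any analytic worry and reduces the problem to a set-theoretic equality.

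For the forward inclusion $p_M \Hilm \subseteq \Hilm + tM$, I would take an arbitrary $x = \sum_{i \leq 0} t^i x_i \in \Hilm$ and read off the Laurent coefficients of
\[
p_M x = t\,\pi_M x_0 \,+\, \sum_{j \leq 0} t^j \bigl( \pi_M x_{j-1} + \pi_{M^{\ast}} x_j \bigr).
\]
The coefficient at $t^1$ lies in $M$, whereas the coefficients at $t^j$ for $j \leq 0$ are arbitrary elements of $\Hil$, so $p_M x \in tM + \Hilm$.

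For the reverse inclusion I would check $\Hilm \subseteq p_M \Hilm$ and $tM \subseteq p_M \Hilm$ separately, using the explicit inverse. Given $z = \sum_{i \leq 0} t^i z_i \in \Hilm$, the element $y := p_M^{-1} z$ has its $t^j$-coefficient equal to $\pi_M z_{j+1} + \pi_{M^{\ast}} z_j$, which vanishes for all $j \geq 1$ since $z_{j+1} = z_j = 0$ there; hence $y \in \Hilm$ and $p_M y = z$. Given $m \in M$, the same formula gives $p_M^{-1}(tm) = \pi_M m + t\,\pi_{M^{\ast}} m = m$, which lies in $M \subseteq \Hil \subseteq \Hilm$, and applying $p_M$ recovers $tm$.

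Finally, $\Hilm$ and $tM$ are supported on disjoint powers of $t$ (indices $\leq 0$ versus index $+1$), so they are mutually orthogonal in $\Hill$ by the definition \eqref{eq:hill_inner_product}, whence their sum is already the orthogonal direct sum $\Hilm \oplus tM$. Combining the two inclusions yields $\Omega(p_M) = p_M \Hilm = \Hilm \oplus tM$, as claimed. The entire argument is pure bookkeeping on Laurent coefficients, so I do not foresee a conceptual obstacle; the only point worth remembering is that closedness of the right-hand side comes for free from orthogonality rather than requiring a separate topological argument.
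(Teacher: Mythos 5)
Your proof is correct and follows essentially the same route as the paper: both compute the Laurent coefficients of $p_M x$ for the forward inclusion and use the explicit inverse $p_M^{-1} = t^{-1}\pi_M + \pi_{M^{\ast}}$ for the reverse one (the paper applies $p_M^{-1}$ to a general element of $\Hilm \oplus tM$ at once, whereas you treat the two orthogonal summands separately, which is an immaterial difference). Your closing observation that the sum is closed because the summands are orthogonal is a correct and welcome explicit justification of a point the paper leaves implicit.
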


\begin{proof}
$tM$ is contained in $t \Hilp = \Hilm^{\ast}$ therefore the sum is indeed orthogonal.

We must show $p_M \Hilm = \Hilm \oplus tM$.

Take $x = \sum_{i=-\infty}^0 t^i x_i \in \Hilm$, then an easy calculation shows that
\[
p_M x = t \pi_M x_0 + \sum_{i=-\infty}^0 t^i \left( \pi_M x_{i-1} + \pi_{M^{\ast}}x_i \right) \in \Hilm \oplus tM,
\]
thus proving $p_M \Hilm \subseteq \Hilm \oplus tM$.

In order to prove $p_M \Hilm \supseteq \Hilm \oplus tM$ we must show $p_M^{-1} \left( \Hilm \oplus tM \right) \subseteq \Hilm$.

Let $x = tx_1 + \sum_{i=-\infty}^0 t^i x_i \in \Hilm \oplus tM $, that is, $x_1 \in M$. Furthermore $p_M^{-1} = p_M^{\ast} = t^{-1} \pi_M + \pi_{M^{\ast}}$. We calculate:
\[
p_M^{-1} x = t \underbrace{\pi_{M^{\ast}}x_1}_{=0} + \sum_{i = -\infty}^0 t^i \left( \pi_M x_{i+1} + \pi_{M^{\ast}} x_i \right) \in \Hilm
\]
which proves the other inclusion.
\end{proof}

\begin{prop} \label{prop:omega_is_surjective}
$\Omega^+$ is surjective.
\end{prop}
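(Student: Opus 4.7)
The proof should go by induction on the least $n \geq 0$ such that $M \in \sub(\Hil)_0^n$. The base case $n = 0$ is immediate, as then $M = \Hilm = \Omega^+(1)$. For the inductive step, the plan is to produce some $N \in X(\A^{\prime})$ such that left multiplication by $p_N^{-1}$ sends $M$ into $\sub(\Hil)_0^{n-1}$. Induction then yields $\psi \in \PPU^+(\A)$ with $p_N^{-1}M = \psi\Hilm$, whence $M = \Omega^+(p_N \psi)$.

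The crucial choice is
\[
N := \{v \in \Hil : tv \in M\}.
\]
This $N$ is closed, being the preimage of $M$ under the isometric embedding $v \mapsto tv: \Hil \hookrightarrow \Hill$, and $\A^{\prime}$-invariant, since $\A^{\prime}$ commutes with $t$ and $M$ is $\A^{\prime}$-invariant. Thus $N \in X(\A^{\prime})$, and $p_N \in \PPU^+(\A)$ is defined.

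We then verify the two inclusions for $M' := p_N^{-1}M$. The lower bound $\Hilm \subseteq M'$ is equivalent, by \autoref{lem:what_is_omega_pm}, to $\Hilm \oplus tN \subseteq M$, which is immediate from $\Hilm \subseteq M$ and the definition of $N$. For the upper bound $M' \subseteq t^{n-1}\Hilm$, one computes $p_N \cdot t^{n-1}\Hilm = t^{n-1}\Hilm \oplus t^n N$, so we must show that the $t^n$-coefficient $x_n$ of every $x = \sum_{i \leq n} t^i x_i \in M$ lies in $N$, i.e.\ that $tx_n \in M$. Applying $t^{-1}$-invariance of $M$ iteratively gives $t^{-(n-1)}x \in M$, and this element decomposes as $t x_n + \sum_{i \leq 0} t^i x_{i+n-1}$, whose second summand lies in $\Hilm \subseteq M$; subtraction then yields $tx_n \in M$. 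Finally $M'$ is closed because $p_N$ acts unitarily on $\Hill$ by \autoref{prop:ppu_acts_unitarily}, and $\A^{\prime}\polym$-invariant because $p_N^{-1}$ commutes with both $\A^{\prime}$ and $t^{-1}$. Hence $M' \in \sub(\Hil)_0^{n-1}$, completing the induction.

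The main conceptual obstacle is selecting the right $N$. The naive candidate --- the space of top coefficients $\{x_n : x \in M\}$ --- is visibly $\A^{\prime}$-invariant, but its closedness in $\Hil$ is a delicate analytical issue, since continuous projections between Hilbert spaces need not have closed range. The choice $N = \{v : tv \in M\}$ sidesteps this problem by being a preimage --- automatically closed --- while the $t^{-1}$-invariance of $M$ is precisely what guarantees that this $N$ is nonetheless large enough to absorb every top coefficient of $M$.
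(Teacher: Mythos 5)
Your proof is correct and follows essentially the same route as the paper: induction on $n$, peeling off one factor $p_N$ at a time with exactly the same key subspace $N = \{v \in \Hil : tv \in M\}$ (which the paper extracts as $tM_1 = t\Hil \cap M$ via the orthomodular law). The only differences are cosmetic --- you verify the upper bound $p_N^{-1}M \subseteq t^{n-1}\Hilm$ by a direct coefficient computation where the paper applies $p_{M_1}^{-1}$ to the inclusion $t^{-N}M + \Hilm \subseteq M \cap t\Hilm$, and you justify closedness of $N$ as a preimage under $v \mapsto tv$ rather than as an intersection of closed subspaces.
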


\begin{proof}
Take $M \in \sub(\Hil)_0^{\infty}$. We will show that there is a $\varphi \in \PPU^+(\A)$ with $\varphi^{-1} M = \Hilm $. That clearly proves our proposition.

There is an $n \geq 0$ with $M \in \sub(\Hil)_0^n$. The proof will be by induction over this $n$.

If $n = 0$ then $M = \Hilm = 1\cdot \Hilm$.

Assume that for fixed $N \geq 0$ we have shown that for each $M \in \sub(\Hil)_0^N$ there is a $\varphi \in \PPU^+(\A)$ with $\varphi^{-1}M = \Hilm$.

Now take $M \in \sub(\Hil)_0^{N+1}$. $M$ is invariant under multiplication by $t^{-1}$ and contains $\Hilm$, so:
\begin{equation} \label{eq:eq_in_proof_of_surjectivity}
t^{-N} M + \Hilm \subseteq M \cap t \Hilm.
\end{equation}

There is a closed $\A^{\prime}$-invariant subspace $M_1$ such that $M \cap t \Hilm = \Hilm \oplus tM_1$ - this follows from the following consideration:

The orthomodular law for Hilbert spaces, together with $\Hilm \subseteq M \cap t \Hilm$, implies that
\[
M \cap t \Hilm = \Hilm \oplus \left(\Hilm^{\ast} \cap t \Hilm \cap M \right).
\]
The second summand can be determined further:
\[
\Hilm^{\ast} \cap t \Hilm \cap M = t \Hilp \cap \Hilm \cap M = t \Hil \cap M
\]
which is of the form $tM_1$ for some $M_1 \in X(\A^{\prime})$.

Therefore $\Hilm \oplus tM_1 \subseteq M$. From \autoref{lem:what_is_omega_pm}, we deduce
\[\Hilm = p_{M_1}^{-1} \left( \Hilm \oplus tM_1 \right) \subseteq p_{M_1}M.
\]

We claim that $p_{M_1}^{-1} M \in \sub(\Hil)_0^N$, so it remains to show that also $p_{M_1}^{-1} M \subseteq t^N \Hilm$. Using \autoref{eq:eq_in_proof_of_surjectivity}, we calculate:
\begin{align*}
t^{-N} p_{M_1}^{-1} M & = p_{M_1}^{-1} t^{-N} M \\
& \subseteq p_{M_1}^{-1} \left( t^{-N} M + \Hilm \right) \\
& \overset{\text{(\ref{eq:eq_in_proof_of_surjectivity}})}{\subseteq} p_{M_1}^{-1} \left( M \cap t \Hilm \right) \\
& = p_{M_1}^{-1} \left( \Hilm \oplus tM_1 \right) = \Hilm.
\end{align*}

By our induction hypothesis, there is a $\varphi \in \PPU^+(\A)$ with $\varphi^{-1} p_{M_1}^{-1} M = \Hilm$, i.e. $(p_{M_1}\varphi)^{-1}M = \Hilm$.
\end{proof}

\subsection{Proof of \autoref{thm:PPU_is_lattice_ordered}}

We are finished with our proof of \autoref{thm:PPU_is_lattice_ordered} as soon as we can pull back the lattice structure of $\sub(\Hil)_{-\infty}^{\infty}$ to $\PPU(\A)$. We need the following
\begin{lem} \label{lem:omega_is_an_embedding}
Let $\varphi, \varphi^{\prime} \in \PPU(\A)$ with $\Omega(\varphi) \subseteq \Omega(\varphi^{\prime})$. Then $\varphi \leq \varphi^{\prime}$.
\end{lem}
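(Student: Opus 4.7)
The plan is that this lemma should drop out almost immediately from \autoref{lem:sending_hilm_to_hilm}, once we ``divide out'' the action of $\varphi'$. Concretely, I would start from the hypothesis $\Omega(\varphi) = \varphi \Hilm \subseteq \varphi' \Hilm = \Omega(\varphi')$ and exploit the fact that $\varphi'$ acts as a unitary (hence bijective) operator on $\Hill$ by \autoref{prop:ppu_acts_unitarily}. Applying $(\varphi')^{-1}$ to both sides of the inclusion, I obtain
\[
(\varphi')^{-1}\varphi \, \Hilm \subseteq \Hilm.
\]

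I would then set $\chi := (\varphi')^{-1}\varphi$, which lies in $\PPU(\A)$ as a product of two pure paraunitary elements. The display above reads $\chi \Hilm \subseteq \Hilm$, which is exactly the hypothesis of \autoref{lem:sending_hilm_to_hilm}, yielding $\chi \in \PPU^-(\A)$. Inverting, $\chi^{-1} = \varphi^{-1}\varphi' \in \PPU^+(\A)$, so we can factor $\varphi' = \varphi \cdot (\varphi^{-1}\varphi')$ with the right-hand factor in $\PPU^+(\A)$. Unpacking the definition of the right-invariant order on $\PPU(\A)$ from the preceding subsection, this is precisely the assertion $\varphi \leq \varphi'$.

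There is really no serious obstacle: the argument is a one-line reduction to \autoref{lem:sending_hilm_to_hilm}, with \autoref{prop:ppu_acts_unitarily} doing the bookkeeping that lets us treat $(\varphi')^{-1}$ as a genuine operator on $\Hill$. The only point demanding any care is the choice of side on which to multiply, so that the resulting element $\chi$ ends up stabilizing $\Hilm$ in the direction required by \autoref{lem:sending_hilm_to_hilm} (and so that the final factorization lands $\varphi^{-1}\varphi'$, not the conjugate $\varphi'\varphi^{-1}$, in the positive cone).
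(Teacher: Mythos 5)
Your proof is correct, but it is not the route the paper takes, and it is in fact the more economical of the two. The paper rewrites the hypothesis as $\Hilm \subseteq \varphi^{-1}\varphi'\Hilm$ and then invokes the surjectivity of $\Omega^+$ (\autoref{prop:omega_is_surjective}) to manufacture some $\psi \in \PPU^+(\A)$ with $\psi\Hilm = \varphi^{-1}\varphi'\Hilm$, followed by injectivity to force $\psi = \varphi^{-1}\varphi'$; this leans on the inductive construction behind surjectivity, which is the heaviest result in the section. You instead pass to $(\varphi')^{-1}\varphi\,\Hilm \subseteq \Hilm$ and feed $\chi = (\varphi')^{-1}\varphi$ straight into \autoref{lem:sending_hilm_to_hilm}, obtaining $\chi \in \PPU^-(\A)$ and hence $\varphi^{-1}\varphi' = \chi^{-1} \in \PPU^+(\A)$ by \autoref{prop:ppu_plusminus_are_cones} --- exactly the mechanism the paper itself uses to prove injectivity of $\Omega^+$, so nothing new is needed. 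What your version buys is independence from \autoref{prop:omega_is_surjective}; what the paper's version buys is essentially nothing here beyond uniformity of exposition, since surjectivity is available anyway by that point. One small presentational caution: the displayed definition of the order in the paper reads $\varphi \leq \varphi'$ iff $\varphi' = \psi\varphi$ with $\psi$ positive, whereas both your conclusion and the paper's own proofs (including the monotonicity argument for $\Omega$) actually use the convention $\varphi' = \varphi\psi$, i.e.\ $\varphi^{-1}\varphi' \in \PPU^+(\A)$; you have matched the convention that makes $\Omega$ monotone, which is the right call, but it is worth flagging that you are reading the definition the way the paper uses it rather than the way it is literally stated.
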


\begin{proof}
Our condition means that $\varphi \Hilm \subseteq \varphi^{\prime} \Hilm$ which is equivalent to $\Hilm \subseteq \varphi^{-1} \varphi^{\prime} \Hilm$.

By \autoref{prop:omega_is_surjective}, there is a $\psi \in \PPU^+(\A)$ with $\psi \Hilm = \varphi^{-1} \varphi^{\prime} \Hilm$. \autoref{prop:omega_is_injective} now tells us that $\psi = \varphi^{-1} \varphi^{\prime}$ resp. $\varphi \psi = \varphi^{\prime}$ which is exactly what we wanted to show.
\end{proof}

\begin{proof}[Proof of \autoref{thm:PPU_is_lattice_ordered}]
\autoref{thm:omega_is_bijective}, \autoref{prop:omegas_are_welldefined_and_monotone} and \autoref{lem:omega_is_an_embedding} together say that $\Omega: \PPU(\A) \to \sub(\Hil)_{-\infty}^{\infty}$ is a bijective embedding of ordered sets, i.e. it is isotone. But $\sub(\Hil)_{-\infty}^{\infty}$ is a lattice, and so is $\PPU(\A)$.
\end{proof}

We will need the result later, therefore we cite the following part of the proof given above as a corollary:

\begin{cor} \label{cor:omega_is_a_lattice_isomorphism}
$\Omega: \PPU(\A) \to \sub(\Hil)_{-\infty}^{\infty}$ is an isomorphism of lattices.
\end{cor}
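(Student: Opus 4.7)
The proposal is to observe that this corollary is really just the natural packaging of what was already established in the course of proving \autoref{thm:PPU_is_lattice_ordered}. Three facts are in hand: \autoref{thm:omega_is_bijective} (bijectivity of $\Omega$), \autoref{prop:omegas_are_welldefined_and_monotone} (monotonicity of $\Omega$), and \autoref{lem:omega_is_an_embedding} (the converse implication $\Omega(\varphi) \subseteq \Omega(\varphi') \Rightarrow \varphi \leq \varphi'$). Combined, these say that $\Omega$ is a bijection under which $\varphi \leq \varphi'$ holds precisely when $\Omega(\varphi) \subseteq \Omega(\varphi')$, so $\Omega$ is an isomorphism of posets.

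The remainder is a general nonsense argument: an order-isomorphism between two lattices automatically preserves finite meets and joins, because meets and joins are characterized purely by the underlying order. Since we already know from \autoref{thm:PPU_is_lattice_ordered} that $\PPU(\A)$ carries a lattice structure, and $\sub(\Hil)_{-\infty}^{\infty}$ is evidently a lattice (as a sublattice of the closed-subspace lattice of $\Hill$), $\Omega$ must intertwine $\wedge$ and $\vee$. Concretely, for $\varphi, \varphi' \in \PPU(\A)$, the element $\Omega(\varphi \wedge \varphi')$ lies in both $\Omega(\varphi)$ and $\Omega(\varphi')$ by monotonicity; conversely, any $M \in \sub(\Hil)_{-\infty}^{\infty}$ contained in both $\Omega(\varphi)$ and $\Omega(\varphi')$ pulls back along $\Omega^{-1}$ to a lower bound of $\varphi$ and $\varphi'$, hence satisfies $\Omega^{-1}(M) \leq \varphi \wedge \varphi'$ and so $M \subseteq \Omega(\varphi \wedge \varphi')$. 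This identifies $\Omega(\varphi \wedge \varphi')$ as the meet of $\Omega(\varphi)$ and $\Omega(\varphi')$ in $\sub(\Hil)_{-\infty}^{\infty}$, and the argument for joins is completely symmetric.

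The main obstacle here is essentially nonexistent, since the substantive content has already been absorbed into \autoref{thm:omega_is_bijective} and \autoref{lem:omega_is_an_embedding}. The corollary simply isolates the statement in the form that will be convenient to cite in \autoref{sec:ppu_is_structure_group}, where one needs to manipulate meets and joins in $\PPU(\A)$ through their geometric counterparts in the subspace lattice.
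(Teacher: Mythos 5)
Your proposal is correct and follows the paper's own route exactly: the paper derives this corollary by citing the same three results (\autoref{thm:omega_is_bijective}, \autoref{prop:omegas_are_welldefined_and_monotone}, \autoref{lem:omega_is_an_embedding}) to conclude that $\Omega$ is a bijective order-embedding between two lattices, hence a lattice isomorphism. Your explicit verification that an order-isomorphism preserves meets and joins is just the standard argument the paper leaves implicit.
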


\section{The structure group of a von Neumann algebra} \label{sec:ppu_is_structure_group}

Here we will prove that the notions of structure group and pure paraunitary group of a von Neumann algebra actually coincide.

We need a lemma before. Recall that, for $M \in X(\A^{\prime})$, we defined $p_M = t\pi_M + \pi_{M^{\ast}}$.

\begin{lem} \label{lem:divisors_of_t}
Let $\varphi \in \PPU^+(\A)$. Then $\varphi \leq t$ if and only if $\varphi = p_M$ for some $M \in X(\A^{\prime})$.
\end{lem}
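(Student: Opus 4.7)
My plan is to prove the two directions separately, using the order-isomorphism $\Omega$ established in \autoref{cor:omega_is_a_lattice_isomorphism} to translate the inequality $\varphi \leq t$ into a statement about subspaces of $\Hill$.

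For the easier direction ($\Leftarrow$), assume $\varphi = p_M$ for some $M \in X(\A^{\prime})$. I would prove $p_M \leq t$ by producing an explicit $\psi \in \PPU^+(\A)$ with $t = \psi p_M$. A direct computation gives
\[
\psi := t p_M^{-1} = t p_M^{\ast} = t\left( t^{-1}\pi_M + \pi_{M^{\ast}} \right) = \pi_M + t\pi_{M^{\ast}}.
\]
This lies in $\A\polyp$, and $\varepsilon_1(\psi) = \pi_M + \pi_{M^{\ast}} = 1$, so $\psi \in \ker \varepsilon_1$. A short calculation using $\pi_M \pi_{M^{\ast}} = \pi_{M^{\ast}}\pi_M = 0$ and $\pi_M^2 = \pi_M$, $\pi_{M^{\ast}}^2 = \pi_{M^{\ast}}$ shows $\psi \psi^{\ast} = \psi^{\ast}\psi = 1$, so $\psi \in \PPU^+(\A)$ and hence $p_M \leq t$.

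For the harder direction ($\Rightarrow$), assume $\varphi \in \PPU^+(\A)$ with $\varphi \leq t$. Applying $\Omega$ and using \autoref{prop:omegas_are_welldefined_and_monotone} together with $\varphi \in \PPU^+(\A)$, we obtain
\[
\Hilm \subseteq \Omega(\varphi) \subseteq \Omega(t) = t\Hilm.
\]
Set $K := \Omega(\varphi)$, a closed $\A^{\prime}\polym$-invariant subspace of $\Hill$. I would then mimic the orthomodular-law argument from the proof of \autoref{prop:omega_is_surjective}: within $\Hill$, orthomodularity gives $K = \Hilm \oplus (K \cap \Hilm^{\ast})$. Since $K \subseteq t\Hilm$ and $\Hilm^{\ast} = t\Hilp$, we have
\[
K \cap \Hilm^{\ast} \subseteq t\Hilm \cap t\Hilp = t\cdot\Hil.
\]
Thus $K \cap \Hilm^{\ast} = tM$ for some closed subspace $M \subseteq \Hil$, and $M$ inherits $\A^{\prime}$-invariance from $K$, so $M \in X(\A^{\prime})$. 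Hence $K = \Hilm \oplus tM$, and by \autoref{lem:what_is_omega_pm} this equals $\Omega(p_M)$. Injectivity of $\Omega$ (\autoref{thm:omega_is_bijective}) then yields $\varphi = p_M$.

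The only point requiring any care is the identification $K \cap \Hilm^{\ast} = tM$ with $M$ closed and $\A^{\prime}$-invariant; however this is essentially the same computation already carried out in the inductive step of \autoref{prop:omega_is_surjective}, so no real obstacle arises.
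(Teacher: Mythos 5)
Your proposal is correct and follows essentially the same route as the paper: the $(\Leftarrow)$ direction via the explicit witness $tp_M^{\ast} \in \PPU^+(\A)$, and the $(\Rightarrow)$ direction by translating $1 \leq \varphi \leq t$ through $\Omega$, applying the orthomodular law to write $\varphi\Hilm = \Hilm \oplus tM$, and invoking \autoref{lem:what_is_omega_pm} together with the injectivity of $\Omega$. No gaps.
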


\begin{proof}
By \autoref{cor:omega_is_a_lattice_isomorphism}, $1 \leq \varphi \leq t$ is equivalent to $\Hilm \subseteq \varphi \Hilm \subseteq t \Hilm$.

By the orthomodular law,
\[
\varphi \Hilm = \Hilm \oplus (\Hilm^{\ast} \cap \varphi \Hilm) = \Hilm \oplus (t \Hilp \cap \varphi \Hilm)
\]
and $t \Hilp \cap \varphi \Hilm \subseteq t \Hilp \cap t \Hilm = t \Hil$. $t \Hilp \cap \varphi \Hilm$ is easily seen to be closed and invariant under $\A^{\prime} \subseteq \A^{\prime}\laur$ and therefore is of the form $tM$ where $M \subseteq \Hil$ is $\A^{\prime}$-invariant.

Therefore $\varphi \Hilm = \Hilm \oplus tM = p_M \Hilm$ (\autoref{lem:what_is_omega_pm}), and therefore $\varphi = p_M$ by \autoref{prop:omega_is_injective}.

On the other hand, for $M \in X(\A^{\prime})$, we have $tp_M^{\ast} \in \PPU^+(\A)$ and $tp_M^{\ast}p_M = t$, from which $p_M \leq t$ follows.
\end{proof}

We proceed with a lemma:

\begin{lem} \label{lem:multiplication_of_pms}
For $M,N \in X(\A^{\prime})$ with $M \bot N$ we have $p_M p_N = p_{M \oplus N}$.
\end{lem}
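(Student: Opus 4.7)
The plan is to verify the identity by direct expansion, relying on the elementary calculus of mutually orthogonal projections. The orthogonality condition $M \bot N$ is equivalent to the inclusions $N \subseteq M^{\ast}$ and $M \subseteq N^{\ast}$, and from these inclusions one reads off a complete table of products of the four relevant projections $\pi_M, \pi_N, \pi_{M^{\ast}}, \pi_{N^{\ast}}$.

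Concretely, first I would observe the following:
\begin{enumerate}[i)]
\item $\pi_M \pi_N = \pi_N \pi_M = 0$, because the image of one projection lies in the kernel of the other;
\item $\pi_M \pi_{N^{\ast}} = \pi_M$ and $\pi_{M^{\ast}}\pi_N = \pi_N$, since $N \subseteq \ker \pi_M = M^{\ast}$ implies that $\pi_M$ is unchanged by precomposing with $\pi_{N^{\ast}} = 1 - \pi_N$, and symmetrically for the other identity;
\item $\pi_M + \pi_N = \pi_{M \oplus N}$, which is the familiar identity for orthogonal projections onto orthogonal subspaces;
\item Consequently $\pi_{M^{\ast}} \pi_{N^{\ast}} = (1-\pi_M)(1-\pi_N) = 1 - \pi_M - \pi_N = 1 - \pi_{M \oplus N} = \pi_{(M \oplus N)^{\ast}}$.
\end{enumerate}

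Then I would expand
\[
p_M p_N = (t\pi_M + \pi_{M^{\ast}})(t\pi_N + \pi_{N^{\ast}}) = t^2 \pi_M \pi_N + t\pi_M \pi_{N^{\ast}} + t\pi_{M^{\ast}}\pi_N + \pi_{M^{\ast}}\pi_{N^{\ast}},
\]
and substitute the identities above to obtain
\[
p_M p_N = 0 + t\pi_M + t\pi_N + \pi_{(M \oplus N)^{\ast}} = t\pi_{M \oplus N} + \pi_{(M \oplus N)^{\ast}} = p_{M \oplus N}.
\]

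There is no real obstacle here; the entire argument is just bookkeeping for commuting projections onto orthogonal invariant subspaces, and does not require any of the Hilbert-space machinery from the previous section. The only mildly non-trivial ingredient is the identity $\pi_M + \pi_N = \pi_{M \oplus N}$ for $M \bot N$, which is standard.
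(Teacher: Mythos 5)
Your proposal is correct and follows essentially the same route as the paper: both establish the product table $\pi_M\pi_N = 0$, $\pi_M\pi_{N^{\ast}} = \pi_M$, $\pi_{M^{\ast}}\pi_N = \pi_N$, $\pi_{M^{\ast}}\pi_{N^{\ast}} = \pi_{(M\oplus N)^{\ast}}$ from $M \bot N$ and then expand $p_Mp_N$ directly. There is nothing to add or correct.
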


\begin{proof}
For $M \bot N$, it is known that $\pi_M \pi_N = 0$ and $\pi_M + \pi_N = \pi_{M \oplus N}$.

First of all, we deduce
\[
\pi_{M^{\ast}}\pi_{N^{\ast}} = (1-\pi_M)(1-\pi_N) = 1 - (\pi_M + \pi_N) + \pi_M \pi_N = 1 - \pi_{M \oplus N} = \pi_{(M \oplus N)^{\ast}}
\]
and
\[
\pi_M \pi_{N^{\ast}} = \pi_M (1-\pi_N) = \pi_M - \pi_M \pi_N = \pi_M.
\]
Similarly, $\pi_{M^{\ast}} \pi_N$.

Using these formulae, we calculate:
\begin{align*}
p_M p_N & = (t \pi_M + \pi_{M^{\ast}})(t \pi_N + \pi_{N^{\ast}}) \\
& = t^2 \pi_M \pi_N + t(\pi_M \pi_{N^{\ast}}+\pi_{M^{\ast}} \pi_N) + \pi_{M^{\ast}}\pi_{N^{\ast}} \\
& = t(\pi_M + \pi_N) + \pi_{(M \oplus N)^{\ast}} \\
& = t \pi_{M \oplus N} + \pi_{(M \oplus N)^{\ast}} = p_{M \oplus N}.
\end{align*}
\end{proof}

\begin{cor} \label{cor:gamma_is_an_isomorphism}
The map defined by
\begin{align*}
\Gamma: X(\A^{\prime}) & \to \left[ 1, t \right] \subseteq \PPU(\A) \\
M & \mapsto p_M
\end{align*}
is an isomorphism of lattices.
\end{cor}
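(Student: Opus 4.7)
The plan is to build $\Gamma$ on top of the already-established lattice isomorphism $\Omega: \PPU(\A) \to \sub(\Hil)_{-\infty}^{\infty}$ from \autoref{cor:omega_is_a_lattice_isomorphism}, translating everything into the subspace picture where intersections and closed sums are manifest.

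First I would verify that $\Gamma$ is well-defined and bijective. Well-definedness, i.e.\ $p_M \in [1,t]$, is the easy half of \autoref{lem:divisors_of_t}, and surjectivity is the other half of that lemma. Injectivity is immediate: if $p_M = p_N$ then $\Omega(p_M) = \Omega(p_N)$, and \autoref{lem:what_is_omega_pm} gives $\Hilm \oplus tM = \Hilm \oplus tN$, which forces $tM = tN$ (both being subspaces of the orthogonal complement $t\Hilp$ of $\Hilm$) and hence $M = N$.

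Next I would prove that $\Gamma$ preserves the lattice operations. Since $\Omega$ is a lattice isomorphism, it suffices to check that $\Omega \circ \Gamma$ intertwines $\wedge,\vee$ on $X(\A^{\prime})$ with $\cap$ and closed sum on $\sub(\Hil)_{-\infty}^{\infty}$. By \autoref{lem:what_is_omega_pm}, $(\Omega \circ \Gamma)(M) = \Hilm \oplus tM$. For meets, any element of $(\Hilm \oplus tM) \cap (\Hilm \oplus tN)$ can be written uniquely as $h + tx$ with $h \in \Hilm$ and $tx \in t\Hilp$, and comparing these unique decompositions in both summands yields $x \in M \cap N$; so
\[
(\Hilm \oplus tM) \cap (\Hilm \oplus tN) = \Hilm \oplus t(M \wedge N) = (\Omega \circ \Gamma)(M \wedge N).
\]
For joins, the closed sum is
\[
\overline{(\Hilm \oplus tM) + (\Hilm \oplus tN)} = \Hilm \oplus \overline{t(M+N)} = \Hilm \oplus t(M \vee N) = (\Omega \circ \Gamma)(M \vee N),
\]
where pulling $\Hilm$ out of the closure is legitimate because $\Hilm \perp (tM + tN)$, so the sum is already an orthogonal direct sum and closing it commutes with the summation.

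I do not expect a real obstacle: the only mild point is to make sure that the orthogonal decompositions $\Hilm \oplus tM$ are compatible with intersection and with taking closures of sums, but both follow from the orthogonality $\Hilm \perp t\Hilp$. Once this is observed, $\Gamma$ is seen to be a bijection that transports $\wedge$ and $\vee$ on $X(\A^{\prime})$ exactly onto the lattice operations on $[1,t]$ inherited from $\PPU(\A)$, completing the proof.
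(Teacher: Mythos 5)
Your proposal is correct and follows essentially the same route as the paper: both factor $\Gamma$ through the lattice isomorphism $\Omega$ of \autoref{cor:omega_is_a_lattice_isomorphism}, use \autoref{lem:divisors_of_t} to identify $[1,t]$ with the set of $p_M$, and use \autoref{lem:what_is_omega_pm} to identify $\Omega(p_M)$ with $\Hilm \oplus tM$. The only difference is one of detail: you explicitly verify that $M \mapsto \Hilm \oplus tM$ carries $\cap$ and closed sums to $\cap$ and closed sums, whereas the paper simply asserts that this bijection gives a lattice isomorphism $X(\A^{\prime}) \cong [\Omega(1),\Omega(t)]$; your computation is a correct filling-in of that implicit step.
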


\begin{proof}
We construct the inverse map:

By \autoref{lem:divisors_of_t}, $\left[1,t \right]$ consists exactly of the $p_M$ with $M \in X(\A^{\prime})$.

$\Omega^+$ gives an isomorphism between the intervals $\left[ 1,t \right]$ and $\left[ \Omega(1), \Omega(t) \right]$. But each $\A^{\prime}\polym$-invariant subspace between $\Omega(1) = \Hilm$ and $\Omega(t) = t \Hilm$ is of the form $\Hilm \oplus tM$ with $M \in X(\A^{\prime})$.

Therefore $\left[ \Omega(1), \Omega(t) \right] \cong X(\A^{\prime})$ as lattices.

It follows that mapping $p_M$ to $\Omega(p_M) = \Hilm \oplus tM$ (\autoref{lem:what_is_omega_pm}) and then to $M$ provides us with $\Gamma^{-1}$.
\end{proof}

\begin{thm}
$\PPU(\A)$ with the right-invariant lattice-order defined by the positive cone $\PPU^+(\A)$, is isomorphic - in the sense of right-ordered groups - to $G(X(\A^{\prime}))$.
\end{thm}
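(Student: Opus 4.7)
The strategy is to apply Rump's characterization theorem (\autoref{thm:which_groups_are_structure_groups}): I would show that $t \in \PPU^+(\A)$ is a singular strong order unit, verify that the OML structure induced on $[1,t]$ by the theorem agrees via $\Gamma$ with the OML $X(\A^{\prime})$, and then read off the isomorphism $\PPU(\A) \cong G([1,t]) \cong G(X(\A^{\prime}))$ from the converse part of the theorem.

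Checking the three defining properties of a singular strong order unit for $\Delta = t$: for \emph{normality}, I would transport the question through the lattice isomorphism $\Omega$ of \autoref{cor:omega_is_a_lattice_isomorphism}, under which left multiplication by $t$ in $\PPU(\A)$ corresponds to the shift $M \mapsto tM$ on $\sub(\Hil)_{-\infty}^{\infty}$; since multiplication by $t$ on $\Hill$ is a unitary operator commuting with the $\A^{\prime}\polym$-action, this shift is an automorphism of the lattice of closed $\A^{\prime}\polym$-invariant subspaces, hence isotone. For the \emph{strong order unit} property, given $\varphi \in \PPU(\A)$, the Laurent polynomial $\varphi^{\ast}$ has finite support, so $t^k \varphi^{\ast} \in \A\polyp \cap \PPU(\A) = \PPU^+(\A)$ for $k$ sufficiently large, yielding $\varphi \leq t^k$. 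For \emph{singularity}, \autoref{lem:divisors_of_t} lets me write any $x,y \in [1,t]$ as $p_M, p_N$ with $M, N \in X(\A^{\prime})$; expanding $p_Mp_N$ shows that its $t^2$-coefficient is $\pi_M\pi_N$, so the hypothesis $p_Mp_N \leq t$ forces $\pi_M\pi_N = 0$, i.e.\ $M \bot N$. Then \autoref{lem:multiplication_of_pms} gives $p_Np_M = p_Mp_N = p_{M \oplus N}$, which coincides via the lattice isomorphism $\Gamma$ (\autoref{cor:gamma_is_an_isomorphism}) with $p_M \vee p_N = p_{M \vee N} = p_{M \oplus N}$.

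It remains to verify that the orthocomplementation $g \mapsto g^{-1}t$ on $[1,t]$ provided by Rump's theorem is intertwined by $\Gamma$ with $M \mapsto M^{\ast}$ on $X(\A^{\prime})$. A direct computation gives $p_M^{-1}t = (t^{-1}\pi_M + \pi_{M^{\ast}})t = \pi_M + t\pi_{M^{\ast}} = p_{M^{\ast}}$, so $\Gamma$ is an isomorphism of OMLs, and the claim then follows from the converse part of \autoref{thm:which_groups_are_structure_groups}. The main obstacle is singularity: normality and the strong order unit property are essentially formal given the machinery of \autoref{sec:ppu_is_lattice_ordered}, but singularity is the one place where the multiplicative structure of $\PPU(\A)$ must genuinely encode the orthogonality relation in $X(\A^{\prime})$, and this is where the explicit expansion of $p_M p_N$ — reading off orthogonality from a degree condition — plays the crucial role.
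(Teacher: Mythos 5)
Your proposal is correct and follows essentially the same route as the paper: verify that $t$ is a singular strong order unit (with singularity reduced via \autoref{lem:divisors_of_t} to the vanishing of the $t^2$-coefficient of $p_Mp_N$ and \autoref{lem:multiplication_of_pms}), check that $\Gamma$ intertwines $g \mapsto g^{-1}t$ with $M \mapsto M^{\ast}$, and invoke \autoref{thm:which_groups_are_structure_groups}. The only (immaterial) deviation is that you prove normality of $t$ by transporting through $\Omega$ to the shift $M \mapsto tM$, whereas the paper notes more directly that $t$ is central so left multiplication by $t$ coincides with right multiplication, which is isotone by right-invariance.
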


\begin{proof}
We show that $t$ is a singular strong order unit in $\PPU(\A)$:

Clearly, $t$ is positive. $t$ is also normal: right-multiplication by $t$ is clearly order-preserving, due to $\PPU(\A)$ being right-ordered, and so is left-multiplication because $t$ commutes with each element of $\A \laur$.

Let $\varphi \in \PPU(\A)$. We show that it right-divides some power of $t$: let $n$ be the greatest integer with $\varphi_n \neq 0$. Then $\psi := t^n \varphi^{\ast} \in \PPU^+(\A)$. Therefore,
\[
\psi \varphi = t^n \varphi^{\ast} \varphi = t^n.
\]

The last step is to show that $t$ is singular:

The divisors of $t$ are exactly the $p_M$ with $M \in X(\A^{\prime})$ (\autoref{lem:divisors_of_t}).  For $p_M, p_N$ we calculate
\[
p_M p_N  = (t \pi_M + \pi_{M^{\ast}})(t \pi_N + \pi_{N^{\ast}}) = t^2 \pi_M \pi_N + t(\pi_M \pi_{N^{\ast}} + \pi_{M^{\ast}} \pi_N) + \pi_{M^{\ast}} \pi_{N^{\ast}}.
\]
The $t^2$-coefficient is zero if and only if $M \bot N$. In this case, from \autoref{lem:multiplication_of_pms} follows $p_N p_M = p_{M \oplus N} = p_M \vee p_N$. So, $t$ is indeed singular.

Rump's \autoref{thm:which_groups_are_structure_groups} now tells us that $\left[ 1,t \right]$ - with the lattice-order inherited by $\PPU(\A)$ - is an OML under the complementation given by $\varphi^{\circledast} = \varphi^{-1}t$ and the natural embedding $\left[ 1,t \right] \hookrightarrow \PPU(\A)$ identifies the latter with its structure group (we use $\circledast$ for the orthocomplementation because $\ast$ \emph{will} cause confusion in what follows).

By \autoref{cor:gamma_is_an_isomorphism}, we already know that $\left[1,t \right] \cong X(\A^{\prime})$ with respect to the isomorphism $M \mapsto p_M$. The only left task is to show that the orthocomplementation is preserved, too:

Using paraunitarity,
\[
p_M^{\circledast} = p_M^{-1} t = p_M^{\ast} t = \pi_{M^{\ast}} + t \pi_{M} = p_{M^{\ast}},
\]
i.e. $\Gamma(M^{\ast}) = \Gamma(M)^{\circledast}$ meaning that $\Gamma$ is even an isomorphism of OMLs.

Therefore, $\PPU(\A) \cong G(\left[1,t \right]) \cong G(X(\A^{\prime}))$.
\end{proof}

\section{Possible generalizations} \label{sec:remarks}

Our definition of $\PPU(\A)$ is rather \glqq algebraic\grqq\, in nature, in that it is constructed as a ring of \emph{finite} Laurent series over $\A$.

Clearly, this restriction is necessary for $\PPU(\A)$ being a structure group. However, one might wonder if there is a slightly bigger group which is a structure group of $X(\A^{\prime})$ in another, more analytic, sense.

One more or less obvious extension is the following:

We first define
\[
l^1(\A) = \left\{ f: \Z \to \A: \sum_{i=-\infty}^{\infty} \Vert f(i) \Vert < \infty \right\}
\]
where $\Vert . \Vert$ is the operator norm. This becomes a Banach algebra under convolution and can be made a Banach-$\ast$-algebra by defining $f^{\ast}$ by $f^{\ast}(i) = (f(-i))^{\ast}$.

We can define an \emph{analytic pure paraunitary group} by
\[
\textnormal{pu}(\A) := \left\{ f \in l^1(\A): f^{\ast} f = 1 = f f^{\ast} \right\}.
\]
Specializing at $1$ still can be made sense in this framework, and so we could define an \emph{analytic} pure paraunitary group by
\[
\textnormal{ppu}(\A) := \left\{ f \in \textnormal{pu}(\A): \sum_{i=-\infty}^{\infty} f(i)=1 \right\}
\]
which is still a subgroup which is also right-ordered by the positive cone $\textnormal{ppu}^+(\A)$, consisting of the $f \in \textnormal{ppu}(\A)$ with $f(i) = 0$ for $i < 0$.

It is, however, not clear, if this is a lattice order. A meaningful definition of $\textnormal{ppu}(\A)$ should include this. Furthermore, a variation of \autoref{cor:omega_is_a_lattice_isomorphism}
should hold: mapping $\textnormal{ppu}(\A)$ to the subspace lattice of $\Hill$ by means of $f \mapsto f \Hilm$ should result in an order-isomorphism with a lattice of $\A^{\prime} \left[ \left[ t^{-1} \right] \right] $-invariant subspaces $M$ of $\Hill$ with some \glqq nice\grqq\, analytical properties.

One possible property could be that $M$ fulfils
\begin{align*}
\bigcup_{i=0}^{\infty} t^i M & = \Hill \quad \textnormal{and} \\
\bigcap_{i=0}^{\infty} t^{-i} M & = \{ 0 \}.
\end{align*}

However, it might be that a definition by means of $l^1(\A)$ is too naive and a more subtle kind of convergence is needed here.

In each case, an analytic pure paraunitary group - together with a compatible definition of \glqq analytic\grqq\, structure groups for general OMLs - could help understanding completion processes in lattice-ordered groups, may they be one- or two-sided.

\section*{Acknowledgements}

I am very grateful to Wolfgang Rump for suggesting the research topic and giving valuable advice on the presentation of this work.

\bibliographystyle{halpha}

\end{document}